\documentclass[a4paper,11pt]{article}
\usepackage[multidot]{grffile}
\usepackage{latexsym,amssymb,enumerate,amsmath,epsfig,amsthm}
\usepackage[margin=1in]{geometry}
\usepackage{setspace,color}
\usepackage{graphics}
\usepackage{graphicx}
\usepackage[ruled]{algorithm2e}
\usepackage{empheq}
\usepackage{bm,amsmath}
\usepackage{subcaption}
\usepackage{hyperref}
\hypersetup{
    colorlinks=true,
    linkcolor=blue,
    filecolor=magenta,
    urlcolor=cyan,
    citecolor=red,
    pdfpagemode=FullScreen,
}

\usepackage{rotating,booktabs,array,amsmath,amssymb}
\newcolumntype{C}{>{$\displaystyle}c<{$}}
\usepackage{multirow}
\usepackage{lineno}

\newcommand{\bS}{\mathbb{S}}
\newcommand{\bR}{\mathbb{R}}
\newcommand{\bH}{\mathbb{H}}

\newcommand{\SLERP}{\operatorname{SLERP}}

\newcommand{\BCH}{\operatorname{BCH}}

\newcommand{\ImH}{\operatorname{Im}\bH}

\numberwithin{equation}{section}

\theoremstyle{plain}
\newtheorem{thm}{Theorem}[section]
\newtheorem{prop}[thm]{Proposition}
\newtheorem{lemma}[thm]{Lemma}
\newtheorem{corollary}[thm]{Corollary}

\theoremstyle{definition}
\newtheorem{definition}[thm]{Definition}
\newtheorem{assump}[thm]{Assumption}

\theoremstyle{remark}
\newtheorem{remark}[thm]{Remark}

\title{A Quaternion--BCH Framework for the Local Accuracy of SIDER Interpolation}
\author{
Shingyu Leung\thanks{Department of Mathematics, the Hong Kong University of Science and Technology, Clear Water Bay, Hong Kong. Email: {\bf masyleung@ust.hk}}
}
\date{}

\begin{document}

\maketitle

\begin{abstract}
Spherical Interpolation of orDER \(n\) (SIDER-\(n\)) is a recursive
high-order interpolation method for data on the unit sphere \(\bS^2\), built
from repeated spherical linear interpolation (SLERP).  This paper develops a
quaternion--Lie algebra framework for proving the local consistency of SIDER
for smooth spherical curves sampled at equally spaced parameter values.  Points
on \(\bS^2\) are represented as pure unit quaternions, and interpolation errors
are measured in fixed-base quaternion logarithmic coordinates.  In this
setting, each SLERP operation admits an exact Baker--Campbell--Hausdorff
(BCH) representation, which converts the geometric interpolation problem into
an algebraic problem involving filtered Lie-polynomial expansions.  The BCH
expansion shows that SLERP is affine to leading order, has no quadratic
correction, and has a first nonlinear correction that is cubic and
commutator-valued.  Using this structure, we prove that SIDER2 has a
third-order divided-error form with the same leading nodal factor as ordinary
quadratic interpolation.  We then show that the recursive SIDER step raises
the order by one: the affine part gives the Neville-type finite-difference
cancellation, while the nonlinear BCH remainder preserves the sharp filtered
degree structure after the nodal factor is removed.  Consequently, for every
fixed \(n\geq2\),
$
        d_{\bS^2}\bigl(\gamma(\theta h),P_i^{[n]}(\theta;h)\bigr)
        =
        O(h^{n+1})
$
under the stated smoothness and small-stencil assumptions.  The proof also
identifies the shift-invariance of the leading divided-error coefficient as
the algebraic compatibility condition underlying the SIDER recurrence.
\end{abstract}

\section{Introduction}

Interpolation of data constrained to a nonlinear manifold is a common task in
scientific computing.  In many applications the quantities of interest are not
arbitrary vectors in Euclidean space, but directions, orientations, rotations,
or normalized state variables.  A representative example is interpolation on
the unit sphere \(\bS^2\).  Applying standard polynomial interpolation to the
ambient Euclidean coordinates generally produces a curve that leaves the
sphere.  Projecting the curve back to \(\bS^2\) restores the constraint, but
such a projection changes the geometry of the interpolant and may affect its
formal approximation properties.  This motivates interpolation methods that
respect the spherical constraint throughout the construction.

Spherical interpolation is particularly important in computer graphics,
attitude dynamics, rigid-body motion, molecular simulation, and numerical
methods involving direction- or quaternion-valued unknowns.  Quaternions have
long been used as an efficient representation of three-dimensional rotations
and as a natural language for orientation interpolation
\cite{ham63,kui02,muk02,zha97}.  The classical spherical linear interpolation
method, or SLERP, joins two unit quaternions by a constant-speed geodesic
\cite{sho85}.  SLERP is geometrically natural and exactly preserves the unit
constraint, but a piecewise SLERP curve is only a low-order reconstruction and
has limited smoothness at the interpolation nodes.

In \cite{fonleu23}, we introduced a recursive family of spherical
interpolants called SIDER, standing for Spherical Interpolation of orDER
\(n\).  The construction is inspired by B\'ezier and Neville-type interpolation:
SIDER2 interpolates three spherical data points, SIDER3 combines two adjacent
SIDER2 interpolants, and higher SIDER levels are obtained recursively by
combining adjacent lower-level reconstructions.  The essential geometric
feature is that each affine interpolation step in the Euclidean construction
is replaced by SLERP.  SIDER also serves as the high-order building block in
the spherical essentially non-oscillatory method SENO, which selects among
candidate stencils to reduce oscillations near nonsmooth data
\cite{fonleu23}.

The numerical evidence in \cite{fonleu23} indicates that SIDER-\(n\) has the
expected local accuracy \(O(h^{n+1})\) for smooth data.  A direct proof in
normal coordinates is possible, but the calculations become increasingly
complicated because each SLERP operation contributes curvature-dependent
correction terms.  The purpose of the present paper is to give a different and
more algebraic proof using the quaternion representation.  The key observation
is that, in fixed-base quaternion logarithmic coordinates, SLERP has an exact
BCH representation:
\[
        \log\!\left(q_\theta^{-1}
        \SLERP(q_\theta\exp X,q_\theta\exp Y,\lambda)\right)
        =
        \BCH\!\left(X,\lambda\,\BCH(-X,Y)\right).
\]
Thus the nonlinear geometry of SLERP is encoded in the BCH series.  This
formulation makes transparent two important structural facts: there is no
quadratic correction in the local SLERP expansion, and the first nonlinear
correction is cubic and commutator-valued.

The proof proceeds by combining this BCH representation with a filtered
divided-error argument.  The base SIDER2 construction has the same leading
third-order error as quadratic interpolation, because the cubic BCH correction
vanishes when evaluated on the collinear first-order data.  The recursive
SIDER step then has the same algebraic form as a Neville finite-difference
cancellation.  Once a lower-level SIDER error is written in divided-error form,
the difference of adjacent divided-error coefficients produces an extra factor
of \(h\), and hence raises the order by one.  The nonlinear BCH remainder is
controlled in diagonal variables; its powers of the interpolation parameter are
paired with finite-difference degree savings, so the sharp filtered structure
is preserved through the recursion.

The analysis is local and asymptotic.  We assume that the data points and all
intermediate SIDER values remain in a common normal neighborhood of the exact
solution value.  This ensures that the quaternion logarithm is single-valued,
that SLERP uses a consistent local branch, and that the BCH expansions are
valid uniformly on bounded parameter intervals.  The paper focuses on equally
spaced parameter values; nonuniform stencils would require the corresponding
nonuniform Neville weights.

The remainder of the paper is organized as follows.  Section~2 reviews the
quaternion representation of spherical data and writes the SIDER construction
in quaternion notation.  Section~3 derives the exact BCH form of SLERP and its
cubic expansion.  Section~4 develops the filtered divided-error calculus used
in the proof.  Section~5 proves the SIDER2 base estimate.  Section~6 proves
the recursive order-raising lemma.  Section~7 combines these ingredients to
obtain the all-order local consistency theorem.  The final section summarizes
the result and discusses the scope of the argument.

\section{Quaternion representation and the SIDER construction}

We begin by introducing the quaternion notation used throughout the paper and
by rewriting the SIDER construction in this framework.  The purpose of this
section is twofold.  First, it fixes the algebraic setting in which SLERP can
be expressed through exponential and logarithm maps on the unit-quaternion
group.  Second, it records the SIDER2 and recursive SIDER formulas in a form
that is suitable for the Baker--Campbell--Hausdorff analysis developed in the
next sections.  The quaternion point of view is particularly useful because it
turns the geometric operation of spherical interpolation into an explicit
composition of Lie-group exponentials and logarithms.

Let \(\bH\) denote the algebra of quaternions.  We write a quaternion as
\[
        q=(a,\mathbf u)
        =
        a+u_1{\bf i}+u_2{\bf j}+u_3{\bf k},
        \qquad a\in\bR,\quad \mathbf u\in\bR^3 .
\]
The scalar part of \(q\) is \(a\), and the vector part is \(\mathbf u\).  The
Hamilton product of two quaternions is
\[
(a,\mathbf u)(b,\mathbf v)
=
(ab-\mathbf u\cdot\mathbf v,\,
a\mathbf v+b\mathbf u+\mathbf u\times\mathbf v).
\]
The Euclidean norm on \(\bH\cong\bR^4\) is multiplicative, and the unit
quaternions form a Lie group,
\[
        \bH_1=\{q\in\bH: |q|=1\}.
\]
The Lie algebra of \(\bH_1\) is the space of pure imaginary quaternions,
\[
        \ImH=\{(0,\mathbf u):\mathbf u\in\bR^3\}.
\]
We will identify \(\ImH\) with \(\bR^3\) whenever convenient.  Under this
identification, the Lie bracket is
\[
        [X,Y]=XY-YX=2X\times Y,
        \qquad X,Y\in\ImH .
\]
Thus noncommutativity of quaternion multiplication is represented by the
ordinary cross product in \(\bR^3\).  This observation is useful later, because
the BCH correction terms are nested commutators, and hence can be interpreted
geometrically as curvature-type cross-product corrections.

The exponential and logarithm maps on \(\bH_1\) have explicit forms.  If
\(X=(0,\mathbf x)\in\ImH\), then
\[
        \exp X
        =
        \left(\cos |\mathbf x|,
        \frac{\sin |\mathbf x|}{|\mathbf x|}\mathbf x\right),
\]
with the standard limiting interpretation at \(|\mathbf x|=0\).  Conversely,
if \(q=(a,\mathbf u)\in\bH_1\) is sufficiently close to the identity and
\(\mathbf u\neq0\), then
\[
        \log q
        =
        \left(0,
        \frac{\arccos a}{|\mathbf u|}\mathbf u\right).
\]
All logarithms in this paper are understood locally, using a fixed branch in a
small neighborhood of the identity.

A point \(p\in\bS^2\) is embedded into the unit quaternions as the pure unit
quaternion
\[
        q=(0,p)\in\bH_1 .
\]
For a smooth spherical curve \(\gamma(t)\in\bS^2\), we write
\[
        q(t)=(0,\gamma(t)).
\]
The set of pure unit quaternions is not a subgroup of \(\bH_1\).  Nevertheless,
SLERP between two pure unit quaternions remains in the two-dimensional real
linear span of the endpoints and therefore remains pure.  Consequently, the
quaternion formulation is compatible with the original interpolation problem
on \(\bS^2\).  We will use quaternion multiplication to represent the
geodesic-interpolation operation, but the resulting interpolants correspond to
points on the original sphere.

For two nearby unit quaternions \(q_a,q_b\), spherical linear interpolation is
written as
\begin{equation}
\label{eq:q_slerp}
        \SLERP(q_a,q_b,\lambda)
        =
        q_a\bigl(q_a^{-1}q_b\bigr)^\lambda
        =
        q_a\exp\!\left(\lambda\log(q_a^{-1}q_b)\right).
\end{equation}
When \(0\leq\lambda\leq1\), this is the constant-speed geodesic from \(q_a\)
to \(q_b\) in \(\bH_1\).  For bounded values of \(\lambda\) outside this
interval, the same formula gives a local geodesic extrapolation, provided the
logarithm branch is chosen consistently.  This extrapolatory use of SLERP is
essential in SIDER2, where the auxiliary control points are obtained by
extending geodesic segments beyond the middle data point.

The advantage of \eqref{eq:q_slerp} is that it is a Lie-group formula.  If
\(q_a\) and \(q_b\) are both close to a reference value \(q_\theta\), we can
write
\[
        q_a=q_\theta\exp X,
        \qquad
        q_b=q_\theta\exp Y,
        \qquad X,Y\in\ImH .
\]
Then the left-trivialized logarithm of the SLERP point is
\[
\log\!\left(
q_\theta^{-1}\SLERP(q_a,q_b,\lambda)
\right)
=
\log\!\left[
\exp X
\exp\!\left(
\lambda\log(\exp(-X)\exp Y)
\right)
\right].
\]
This identity is the starting point of the BCH analysis.  It expresses SLERP
entirely in terms of exponential, logarithm, and multiplication in the
unit-quaternion group.  The local expansion of SLERP therefore follows from
the Baker--Campbell--Hausdorff formula rather than from direct trigonometric
expansions on the sphere.

We impose the following local assumption throughout the analysis.

\begin{assump}[Small-stencil condition]
\label{assump:small_stencil_q}
For the fixed interpolation level under consideration, \(h>0\) is sufficiently
small that all data points, extrapolated control points, and intermediate
SIDER values remain in a common normal neighborhood of the exact value
\(q(\theta h)\).  Consequently, all quaternion logarithms below are
single-valued, all SLERP operations use the same local branch, and all
constants in the estimates are uniform for \(\theta\) in bounded intervals.
\end{assump}

This assumption is local and asymptotic.  It excludes antipodal ambiguity and
large angular jumps, which are intrinsic difficulties in spherical and
quaternion interpolation.  It also ensures that logarithmic coordinates provide
a valid way to compare the exact curve and the reconstructed values.

Let
\[
        p_j=\gamma(jh),\qquad q_j=q(jh).
\]
For an evaluation point
\[
        s=\theta h,
\]
set
\[
        q_\theta=q(\theta h).
\]
The error of a SIDER candidate \(Q(\theta;h)\) is measured in
left-trivialized logarithmic coordinates:
\[
        E(\theta;h)
        =
        \log\bigl(q_\theta^{-1}Q(\theta;h)\bigr)
        \in \ImH .
\]
Since left multiplication by \(q_\theta^{-1}\) is an isometry on \(\bH_1\),
and since the logarithm map is locally distance-preserving to leading order,
bounds on \(E(\theta;h)\) imply the corresponding local interpolation error
bounds for the spherical curve.  More precisely, in the common normal
neighborhood,
\[
        d_{\bH_1}\bigl(q_\theta,Q(\theta;h)\bigr)
        =
        \bigl|\log(q_\theta^{-1}Q(\theta;h))\bigr|,
\]
and this distance is equivalent to the spherical distance between the
corresponding points on \(\bS^2\).

We now recall the SIDER construction in this notation.  For three data points
\[
        q_i,\quad q_{i+1},\quad q_{i+2},
\]
SIDER2 first defines two extrapolated control points by
\[
        d_a=\SLERP(q_{i+2},q_{i+1},2),
        \qquad
        d_b=\SLERP(q_i,q_{i+1},2).
\]
The point \(d_a\) is obtained by moving from \(q_{i+2}\) toward \(q_{i+1}\)
and continuing the same geodesic beyond \(q_{i+1}\).  Similarly, \(d_b\) is
obtained by moving from \(q_i\) toward \(q_{i+1}\) and extrapolating beyond
\(q_{i+1}\).  These auxiliary points play the role of spherical control points.
They are chosen so that the final construction passes through all three data
points, rather than treating the middle point merely as a B\'ezier control point.

Let
\[
        \xi=\theta-i.
\]
The SIDER2 value on the stencil
\(\{q_i,q_{i+1},q_{i+2}\}\) is
\begin{equation}
\label{eq:q_sider2}
        P_i^{[2]}(\theta;h)
        =
        \SLERP\!\left(
        \SLERP(q_i,d_a,\xi/2),
        \SLERP(d_b,q_{i+2},\xi/2),
        \xi/2
        \right).
\end{equation}
This construction interpolates the three nodes
\[
        \xi=0,\qquad \xi=1,\qquad \xi=2.
\]
Indeed, at \(\xi=0\) and \(\xi=2\) the formula returns the endpoint data
\(q_i\) and \(q_{i+2}\), while the symmetric choice of the extrapolated control
points forces the value at \(\xi=1\) to be \(q_{i+1}\).  If all SLERP
operations in \eqref{eq:q_sider2} are replaced by affine interpolation in a
vector space, the formula reduces to the usual quadratic interpolation
construction.

For \(k\geq3\), the higher-order SIDER reconstruction is defined recursively.
Let \(P_i^{[k]}(\theta;h)\) denote the SIDER-\(k\) interpolant associated with
the stencil
\[
        q_i,q_{i+1},\ldots,q_{i+k}.
\]
Then
\begin{equation}
\label{eq:q_sider_rec}
        P_i^{[k]}(\theta;h)
        =
        \SLERP\!\left(
        P_i^{[k-1]}(\theta;h),
        P_{i+1}^{[k-1]}(\theta;h),
        \frac{\xi}{k}
        \right),
        \qquad \xi=\theta-i .
\end{equation}
This is the spherical counterpart of the equally spaced Neville recursion.
The parameter \(\xi/k\) is the same normalized weight that appears in the
Euclidean recursion, while the affine blending operation is replaced by
SLERP.  Consequently, the SIDER construction preserves the spherical
constraint at every stage, but its error analysis must account for the
noncommutativity and curvature effects encoded in the quaternion logarithms.

The logarithmic error associated with \(P_i^{[k]}\) is
\[
        E_i^{[k]}(\theta;h)
        =
        \log\bigl(q_\theta^{-1}P_i^{[k]}(\theta;h)\bigr).
\]
The rest of the paper studies \(E_i^{[k]}(\theta;h)\) as a formal and
asymptotic power series in \(h\).  The interpolation property gives zeros of
this error at the stencil nodes.  The quaternion--BCH representation of SLERP
then allows us to track how the polynomial dependence on the shifted variable
\(\xi=\theta-i\) is propagated through the SIDER recursion.  This combination
of nodal interpolation and Lie-algebraic structure is the basis for the
all-order consistency result proved below.

\section{BCH form of SLERP}

The central advantage of the quaternion formulation is that SLERP can be
written exactly in fixed-base logarithmic coordinates.  This converts the
geometric interpolation operation into an algebraic operation in the Lie
algebra \(\ImH\).  The resulting expression is particularly well suited for
local error analysis, because it allows all nonlinear effects of SLERP to be
organized by the Baker--Campbell--Hausdorff (BCH) formula.  In this section we
derive the exact BCH representation of SLERP, compute its cubic expansion, and
record two consequences that will be used throughout the analysis of the SIDER
recursion.

Let \(q_\theta\) be the exact quaternion value at the evaluation point
\(s=\theta h\).  Suppose that two nearby unit quaternions \(Q_A,Q_B\) are
written in left-trivialized logarithmic coordinates as
\[
        Q_A=q_\theta\exp X,
        \qquad
        Q_B=q_\theta\exp Y,
        \qquad X,Y\in\ImH .
\]
The vectors \(X\) and \(Y\) represent the local errors of the two endpoints
relative to the same base value \(q_\theta\).  The goal is to express the
logarithmic coordinate of
\[
        \SLERP(Q_A,Q_B,\lambda)
\]
relative to the same base point.  This gives a nonlinear binary operation on
the Lie algebra, depending on the interpolation parameter \(\lambda\).

\begin{lemma}[BCH representation of SLERP]
\label{lem:bch_slerp}
Let \(Q_A=q_\theta\exp X\) and \(Q_B=q_\theta\exp Y\), where
\(X,Y\in\ImH\) are sufficiently small.  Then
\[
\log\!\left(q_\theta^{-1}\SLERP(Q_A,Q_B,\lambda)\right)
=
\mathcal S_\lambda(X,Y),
\]
where
\begin{equation}
\label{eq:S_lambda_BCH}
        \mathcal S_\lambda(X,Y)
        =
        \BCH\!\left(X,\lambda\,\BCH(-X,Y)\right).
\end{equation}
Here
\[
        \BCH(A,B)=\log(\exp A\exp B)
\]
denotes the Baker--Campbell--Hausdorff series.
\end{lemma}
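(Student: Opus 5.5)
The argument is a direct group-theoretic computation from the SLERP formula \eqref{eq:q_slerp}. I will not expand anything into series; the only analytic input is that \(\exp\) and \(\log\) are mutually inverse local diffeomorphisms between a neighborhood of \(0\in\ImH\) and a neighborhood of \(1\in\bH_1\). Assumption~\ref{assump:small_stencil_q} makes this available, and it is also what makes \(\BCH(A,B)=\log(\exp A\exp B)\) well defined and equal to its convergent series for small \(A,B\).

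\textbf{Step 1: the relative rotation.} Substitute \(Q_A=q_\theta\exp X\) and \(Q_B=q_\theta\exp Y\) into \(Q_A^{-1}Q_B\). Since \((\exp X)^{-1}=\exp(-X)\), the base point cancels:
\[
Q_A^{-1}Q_B=\exp(-X)\,q_\theta^{-1}q_\theta\exp Y=\exp(-X)\exp Y .
\]
For \(X,Y\) small this product lies in the neighborhood of the identity where \(\log\) is the local inverse of \(\exp\). Hence
\[
\log(Q_A^{-1}Q_B)=\log(\exp(-X)\exp Y)=\BCH(-X,Y)=:Z .
\]
This is the branch-choice point. SLERP is defined with the local branch, and the local branch applied to \(\exp(-X)\exp Y\) is exactly the one defining BCH, so the two coincide.

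\textbf{Step 2: assemble the SLERP point.} By \eqref{eq:q_slerp},
\[
\SLERP(Q_A,Q_B,\lambda)=q_\theta\exp X\exp(\lambda Z),
\]
and therefore
\[
q_\theta^{-1}\SLERP(Q_A,Q_B,\lambda)=\exp X\exp(\lambda Z).
\]
Since \(Z=O(|X|+|Y|)\) and \(\lambda\) ranges over a bounded interval, \(\lambda Z\) is also small. So \(\exp X\exp(\lambda Z)\) again lies in the identity neighborhood, and taking \(\log\) gives \(\BCH(X,\lambda Z)=\BCH(X,\lambda\BCH(-X,Y))\), which is \eqref{eq:S_lambda_BCH}.

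\textbf{Main obstacle.} The algebra is trivial; the only real care is making the smallness quantifiers uniform. I would fix a radius \(r>0\) such that \(\exp\) restricted to the ball \(\{|A|<r\}\) is a diffeomorphism onto its image and \(\BCH\) converges for \(|A|+|B|<r\). I would then require \(|X|,|Y|<\delta\), with \(\delta\) chosen so that \(|X|+|\lambda|\,|\BCH(-X,Y)|<r\) holds for all \(|\lambda|\le\Lambda\). This is possible because \(|\BCH(-X,Y)|\le C(|X|+|Y|)\). Extrapolation with \(\lambda=2\), as used in SIDER2, is covered by taking \(\Lambda\ge 2\). Under these constraints every \(\log\) above is the same single-valued branch, and the identity holds exactly, not just as a formal series.
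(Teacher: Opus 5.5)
Your proof is correct and is the same argument as the paper's: substitute $Q_A=q_\theta\exp X$ and $Q_B=q_\theta\exp Y$ into \eqref{eq:q_slerp}, cancel the base point to get $q_\theta^{-1}\SLERP(Q_A,Q_B,\lambda)=\exp X\exp\bigl(\lambda\BCH(-X,Y)\bigr)$, and take the logarithm. Your extra bookkeeping (the radius $r$, the bound $|\BCH(-X,Y)|\le C(|X|+|Y|)$, and uniformity over $|\lambda|\le\Lambda$, which covers the extrapolation $\lambda=2$) spells out the single-branch condition that the paper only mentions in passing.
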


\begin{proof}
By the quaternion formula for SLERP,
\[
\SLERP(Q_A,Q_B,\lambda)
=
Q_A\exp\!\left(\lambda\log(Q_A^{-1}Q_B)\right).
\]
Since
\[
        Q_A=q_\theta\exp X,
        \qquad
        Q_B=q_\theta\exp Y,
\]
we have
\[
        Q_A^{-1}Q_B
        =
        \exp(-X)q_\theta^{-1}q_\theta\exp Y
        =
        \exp(-X)\exp Y .
\]
Therefore
\[
q_\theta^{-1}\SLERP(Q_A,Q_B,\lambda)
=
\exp X\,
\exp\!\left(\lambda\log(\exp(-X)\exp Y)\right).
\]
Taking the logarithm of both sides gives
\[
\log\!\left(q_\theta^{-1}\SLERP(Q_A,Q_B,\lambda)\right)
=
\BCH\!\left(
X,
\lambda\,\BCH(-X,Y)
\right),
\]
which is \eqref{eq:S_lambda_BCH}.
\end{proof}

Lemma~\ref{lem:bch_slerp} is exact, as long as all logarithms are taken on the
same local branch.  It shows that the local behavior of SLERP is governed by a
universal Lie-polynomial map.  In particular, the leading term of
\(\mathcal S_\lambda(X,Y)\) should be the affine combination
\[
        (1-\lambda)X+\lambda Y,
\]
while the non-affine corrections are determined by commutators.  The next
lemma computes the first nontrivial correction explicitly.

\begin{lemma}[Cubic BCH expansion]
\label{lem:cubic_bch}
For bounded \(\lambda\) and sufficiently small \(X,Y\in\ImH\),
\begin{equation}
\label{eq:cubic_bch}
\mathcal S_\lambda(X,Y)
=
(1-\lambda)X+\lambda Y
+
\frac{\lambda(1-\lambda)}{12}
        [Y-X,[X,Y]]
+
O\bigl((|X|+|Y|)^4\bigr).
\end{equation}
In particular, the quadratic term vanishes.
\end{lemma}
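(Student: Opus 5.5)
The plan is to expand both nested BCH series in \eqref{eq:S_lambda_BCH} through cubic order and collect terms. I will use the standard third-order BCH expansion
\[
\BCH(A,B)=A+B+\tfrac12[A,B]+\tfrac1{12}\bigl([A,[A,B]]+[B,[B,A]]\bigr)+R_4(A,B),
\]
with \(|R_4(A,B)|\le C(|A|+|B|)^4\) for \(A,B\) in a fixed small ball of \(\ImH\). This bound holds because \(\BCH\) is real-analytic near the origin: it is the composition of \(\exp\), the Hamilton product and the local branch of \(\log\), and its homogeneous Taylor components are the Dynkin Lie polynomials.

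First I compute the inner term \(Z=\BCH(-X,Y)\). Substituting \(A=-X\), \(B=Y\) gives
\[
Z=Y-X-\tfrac12[X,Y]+\tfrac1{12}\bigl([X,[X,Y]]+[Y,[X,Y]]\bigr)+O\bigl((|X|+|Y|)^4\bigr).
\]
In particular \(Z=O(|X|+|Y|)\). Since \(\lambda\) is bounded, \(\lambda Z\) is also small, so the outer series \(\BCH(X,\lambda Z)\) converges. Its quartic remainder is then \(O((|X|+|Y|)^4)\) with constants uniform in \(\lambda\) on bounded intervals.

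Second, I expand \(\BCH(X,\lambda Z)\). I insert the expansion of \(Z\) only to the order that each term needs: the full cubic expansion in the linear term \(\lambda Z\), the quadratic expansion \(Y-X-\tfrac12[X,Y]\) inside \([X,\lambda Z]\), and only \(Z\approx Y-X\) inside the double brackets. All discarded pieces are quartic.

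The quadratic terms are \(-\tfrac\lambda2[X,Y]\) from \(\lambda Z\) and \(\tfrac\lambda2[X,Y-X]=\tfrac\lambda2[X,Y]\) from the outer bracket. They cancel, which gives the claim that there is no quadratic term.

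For the cubic terms I use \([X,X]=0\) and antisymmetry to rewrite each contribution in the basis \([X,[X,Y]]\), \([Y,[X,Y]]\):
\begin{itemize}
\item \(\tfrac\lambda{12}\bigl([X,[X,Y]]+[Y,[X,Y]]\bigr)\) comes from \(\lambda Z\);
\item \(-\tfrac\lambda4[X,[X,Y]]\) comes from \(\tfrac12[X,\lambda Z]\);
\item \(\tfrac\lambda{12}[X,[X,Y-X]]=\tfrac\lambda{12}[X,[X,Y]]\);
\item \(\tfrac{\lambda^2}{12}[Y-X,[Y-X,X]]=-\tfrac{\lambda^2}{12}[Y-X,[X,Y]]\).
\end{itemize}
The first three items sum to \(\tfrac\lambda{12}[Y-X,[X,Y]]\). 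Adding the fourth gives \(\tfrac{\lambda(1-\lambda)}{12}[Y-X,[X,Y]]\), as claimed in \eqref{eq:cubic_bch}.

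The main obstacle is not the algebra but justifying the remainder uniformly. One must check that \(\lambda Z\) stays inside the convergence ball of \(\BCH\) for all bounded \(\lambda\), and that the truncation errors of the inner and outer series combine into a single \(O((|X|+|Y|)^4)\). I would handle this by analyticity. The map \((X,Y,\lambda)\mapsto\mathcal S_\lambda(X,Y)\) is real-analytic on \(\{|X|+|Y|<\delta\}\times[-\Lambda,\Lambda]\) for \(\delta\) small, by the local branch in Assumption~\ref{assump:small_stencil_q}. Taylor's theorem in \((X,Y)\), with \(\lambda\) as a parameter on the compact interval \([-\Lambda,\Lambda]\), then gives the uniform quartic bound. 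The formal computation above only identifies the homogeneous terms of degree at most three.
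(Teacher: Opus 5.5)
Your proposal is correct and follows the same route as the paper. You expand \(Z=\BCH(-X,Y)\) to cubic order, substitute it into \(\BCH(X,\lambda Z)\), observe that the two quadratic terms cancel, and collect the four cubic contributions in the basis \([X,[X,Y]]\), \([Y,[X,Y]]\) to obtain \(\frac{\lambda(1-\lambda)}{12}[Y-X,[X,Y]]\). Your analyticity argument, which gives a quartic remainder bound uniform for bounded \(\lambda\), makes precise a point the paper only asserts through its \(O(4)\) notation.
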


\begin{proof}
We use the BCH expansion through homogeneous degree three:
\[
\BCH(A,B)
=
A+B+\frac12[A,B]
+\frac1{12}[A,[A,B]]
+\frac1{12}[B,[B,A]]
+O(4),
\]
where \(O(4)\) denotes terms of homogeneous degree at least four in \(A\) and
\(B\).  Applying this first to \(\BCH(-X,Y)\), we obtain
\[
\begin{aligned}
\BCH(-X,Y)
&=
Y-X+\frac12[-X,Y]
+\frac1{12}[-X,[-X,Y]]
+\frac1{12}[Y,[Y,-X]]
+O(4)  \\
&=
Y-X-\frac12[X,Y]
+\frac1{12}[X,[X,Y]]
+\frac1{12}[Y,[X,Y]]
+O(4).
\end{aligned}
\]
Set
\[
        Z=\BCH(-X,Y).
\]
Then
\[
        \mathcal S_\lambda(X,Y)=\BCH(X,\lambda Z).
\]
Substituting the expansion of \(Z\) into the BCH formula again gives
\[
\BCH(X,\lambda Z)
=
X+\lambda Z
+\frac12[X,\lambda Z]
+\frac1{12}[X,[X,\lambda Z]]
+\frac1{12}[\lambda Z,[\lambda Z,X]]
+O(4).
\]
The homogeneous degree-one part is
\[
        X+\lambda(Y-X)=(1-\lambda)X+\lambda Y.
\]
The homogeneous degree-two terms are
\[
        -\frac{\lambda}{2}[X,Y]
        +
        \frac{\lambda}{2}[X,Y-X],
\]
which cancel because \([X,Y-X]=[X,Y]\).  Thus there is no quadratic
correction.

It remains to collect terms of degree three.  These come from the cubic part
of \(\lambda Z\), the bracket of \(X\) with the quadratic part of
\(\lambda Z\), and the two cubic BCH terms involving the degree-one part of
\(Z\).  Combining these contributions gives
\[
\begin{aligned}
&\frac{\lambda}{12}[X,[X,Y]]
+\frac{\lambda}{12}[Y,[X,Y]]
-\frac{\lambda}{4}[X,[X,Y]]
+\frac{\lambda}{12}[X,[X,Y]]       \\
&\qquad
-\frac{\lambda^2}{12}[Y-X,[X,Y]] .
\end{aligned}
\]
After simplification, the coefficient of \([X,[X,Y]]\) is
\[
        -\frac{\lambda(1-\lambda)}{12},
\]
and the coefficient of \([Y,[X,Y]]\) is
\[
        \frac{\lambda(1-\lambda)}{12}.
\]
Hence the cubic term is
\[
        \frac{\lambda(1-\lambda)}{12}
        \left(
        [Y,[X,Y]]-[X,[X,Y]]
        \right)
        =
        \frac{\lambda(1-\lambda)}{12}
        [Y-X,[X,Y]].
\]
This proves \eqref{eq:cubic_bch}.
\end{proof}

The cubic expansion has two important interpretations.  First, SLERP is affine
to second order in fixed-base logarithmic coordinates: the quadratic terms
cancel exactly.  Second, the first nonlinear correction is a commutator term.
For pure imaginary quaternions, commutators are cross products.  Hence this
correction vanishes whenever \(X\) and \(Y\) are collinear.  This observation
will explain why the leading SIDER2 error agrees with the Euclidean quadratic
interpolation error: at the lowest relevant order, the local data lie along the
tangent direction of the curve, so the commutator correction does not
contribute.

We next record the stability consequence that will be used repeatedly in the
recursive SIDER analysis.  Once two candidate reconstructions are already
high-order accurate, the final SLERP operation between them behaves affinely to
all orders relevant for the next cancellation.

\begin{lemma}[Affine stability]
\label{lem:q_affine_stability}
Let \(X(h),Y(h)\in\ImH\) satisfy
\[
        X(h)=O(h^m),
        \qquad
        Y(h)=O(h^m),
\]
for some \(m\geq1\).  Then, uniformly for bounded \(\lambda\),
\[
        \mathcal S_\lambda(X(h),Y(h))
        =
        (1-\lambda)X(h)+\lambda Y(h)+O(h^{3m}).
\]
\end{lemma}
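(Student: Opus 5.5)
The plan is to show that every non-affine term of \(\mathcal S_\lambda(X,Y)\) is a Lie monomial of degree at least three in \(X\) and \(Y\). Each such monomial is then \(O(h^{3m})\).

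\textbf{Step 1: structure of the expansion.} By Lemma~\ref{lem:bch_slerp}, \(\mathcal S_\lambda(X,Y)=\BCH(X,\lambda\,\BCH(-X,Y))\). Here \(\BCH\) is a convergent Lie series on a neighborhood of the origin, so \(\mathcal S_\lambda\) is analytic near \((0,0)\), uniformly for \(\lambda\) in a bounded set. I will write
\[
\mathcal S_\lambda(X,Y)=(1-\lambda)X+\lambda Y+R_\lambda(X,Y).
\]
Lemma~\ref{lem:cubic_bch} shows that the homogeneous parts of \(R_\lambda\) of degrees one and two vanish identically. The degree-two claim is that the quadratic terms cancel exactly, not just to leading order.

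\textbf{Step 2: a uniform cubic bound.} Since \(R_\lambda\) is analytic with a zero of order at least three at the origin, Taylor's theorem with remainder gives a constant \(C\) and a radius \(r>0\) such that
\[
|R_\lambda(X,Y)|\le C\,(|X|+|Y|)^3
\qquad\text{for } |X|+|Y|\le r,\ |\lambda|\le\Lambda.
\]
The constant \(C\) bounds the third derivatives of \(\mathcal S_\lambda\) on a compact set in \((X,Y,\lambda)\). To justify this one needs joint smoothness in \(\lambda\), which follows because \(\exp\), \(\log\) and multiplication are analytic and \(\lambda\) enters only through \(\exp(\lambda Z)\).

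\textbf{Step 3: insert the hypotheses.} With \(|X(h)|,|Y(h)|\le Kh^m\), we have \(|X|+|Y|\le r\) for small \(h\), and Step 2 gives
\[
|R_\lambda(X(h),Y(h))|\le C(2K)^3h^{3m}.
\]
This is the claim.

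The main obstacle is the uniformity in Step 2. The lemmas above are stated as formal or asymptotic expansions, so I must check that the \(O(4)\) remainder in Lemma~\ref{lem:cubic_bch} is controlled jointly in \(\lambda\) on bounded sets. I would handle this through analyticity on a compact set, using Assumption~\ref{assump:small_stencil_q} to keep everything on the same local branch of \(\log\). The cubic term itself, \(\tfrac{\lambda(1-\lambda)}{12}[Y-X,[X,Y]]\), is plainly \(O(h^{3m})\), so it can be absorbed into the remainder.
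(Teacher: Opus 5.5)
Your proposal is correct and follows essentially the paper's argument: Lemma~\ref{lem:cubic_bch} shows the non-affine part of \(\mathcal S_\lambda\) starts at cubic order, so inserting \(X,Y=O(h^m)\) gives \(O(h^{3m})\) uniformly for bounded \(\lambda\). Your Taylor-remainder bound \(|R_\lambda(X,Y)|\le C(|X|+|Y|)^3\), using joint analyticity in \((X,Y,\lambda)\) on a compact set, is a more careful version of the paper's appeal to the BCH coefficients being smooth in \(\lambda\).
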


\begin{proof}
By Lemma~\ref{lem:cubic_bch}, the difference between
\(\mathcal S_\lambda(X,Y)\) and the affine combination
\((1-\lambda)X+\lambda Y\) begins with a homogeneous cubic expression in
\(X\) and \(Y\).  If \(X(h),Y(h)=O(h^m)\), this cubic expression is
\(O(h^{3m})\).  The higher-order BCH terms are of still higher order.  The
estimate is uniform for bounded \(\lambda\), since the coefficients in the BCH
expansion are smooth functions of \(\lambda\) on bounded intervals.
\end{proof}

For later use, we also formulate the higher-order structure of
\(\mathcal S_\lambda\).  The exact coefficients of the higher-order terms will
not be needed, but their algebraic form is essential.

\begin{lemma}[Formal Lie-polynomial structure]
\label{lem:q_formal_structure}
For every integer \(M\geq1\), the map
\[
        \mathcal S_\lambda(X,Y)
        =
        \BCH\!\left(X,\lambda\,\BCH(-X,Y)\right)
\]
has an expansion
\[
        \mathcal S_\lambda(X,Y)
        =
        (1-\lambda)X+\lambda Y
        +
        \sum_{r=3}^{M}\mathcal L_r(X,Y;\lambda)
        +
        O\bigl((|X|+|Y|)^{M+1}\bigr),
\]
where each \(\mathcal L_r\) is a homogeneous Lie polynomial of degree \(r\) in
\(X\) and \(Y\).  The coefficients of \(\mathcal L_r\) are polynomials in
\(\lambda\) of degree at most \(r\).  There is no homogeneous term of degree
two.
\end{lemma}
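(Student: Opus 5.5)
The plan is to derive the statement from the Dynkin form of the BCH series, followed by a substitution argument. First, recall that \(\BCH(A,B)=\sum_{r\ge1}\mathcal B_r(A,B)\), where each \(\mathcal B_r\) is a homogeneous Lie polynomial of degree \(r\) in \(A,B\) with rational coefficients. The series converges absolutely for \(|A|+|B|\) small. Under the identification \([X,Y]=2X\times Y\) this holds with uniform geometric bounds \(|\mathcal B_r(A,B)|\le C\rho^{-r}(|A|+|B|)^r\).

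Apply this first to \(Z=\BCH(-X,Y)\). This gives \(Z=\sum_{s\ge1}Z_s(X,Y)\), with \(Z_1=Y-X\) and each \(Z_s\) a homogeneous Lie polynomial of degree \(s\) that does not depend on \(\lambda\). Next substitute \(A=X\) and \(B=\lambda Z\) into \(\mathcal B_r(A,B)\). A Lie monomial of \(\mathcal B_r\) containing \(j\) copies of \(B\) becomes \(\lambda^j\) times an iterated bracket of \(X\)'s and \(Z_{s}\)'s. Each such bracket is again a Lie polynomial in \(X,Y\), because brackets of Lie polynomials are Lie polynomials and multilinearity distributes over the sums \(Z=\sum Z_s\).

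Collect the resulting terms by total homogeneous degree \(r'\) in \((X,Y)\). A term coming from \(\mathcal B_r\) with \(j\) copies of \(B\) has total degree at least \(r\ge j\). Hence the coefficient of \(\lambda^j\) appears only in degrees \(r'\ge j\), and \(\mathcal L_{r'}\) is a polynomial in \(\lambda\) of degree at most \(r'\). Only finitely many pairs \((r,\{s\})\) contribute to a fixed degree \(r'\), so each \(\mathcal L_{r'}\) is a finite, well-defined homogeneous Lie polynomial. Lemma~\ref{lem:cubic_bch} already identifies the degree-one part as \((1-\lambda)X+\lambda Y\) and shows that the degree-two part vanishes identically, so no separate argument is needed for those.

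The step I expect to require care is the remainder bound \(O((|X|+|Y|)^{M+1})\), uniform for bounded \(\lambda\). I would handle it analytically rather than combinatorially. The map \((X,Y,\lambda)\mapsto\mathcal S_\lambda(X,Y)=\log(\exp X\exp(\lambda\log(\exp(-X)\exp Y)))\) is real-analytic on a neighborhood of \(\{0\}\times\{0\}\times[-\Lambda,\Lambda]\), as a composition of analytic maps on the branch fixed by Assumption~\ref{assump:small_stencil_q}. Its Taylor expansion in \((X,Y)\) at the origin therefore has homogeneous components equal to the \(\mathcal L_r\) produced above, by uniqueness of Taylor coefficients and the convergence of the formal composition. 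Taylor's theorem with remainder, or a Cauchy estimate on a complex polydisc in \((X,Y)\) that is uniform over \(\lambda\) in the compact interval \([-\Lambda,\Lambda]\), then bounds the tail after degree \(M\) by \(C_{M,\Lambda}(|X|+|Y|)^{M+1}\).
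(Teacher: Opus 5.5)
Your proposal is correct and follows the paper's route. You expand $\BCH(-X,Y)$ as a universal Lie series, substitute $\lambda\,\BCH(-X,Y)$ into the outer BCH, and bound the $\lambda$-degree by the number of second-argument slots ($j\le r\le r'$). You then cite Lemma~\ref{lem:cubic_bch} for degrees one and two, and obtain the remainder from local convergence. Your real-analyticity and Cauchy-estimate argument for a tail bound uniform over bounded $\lambda$ is simply a more careful version of the paper's one-line appeal to the local convergence of the BCH series.
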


\begin{proof}
The BCH series is a universal Lie series: at each homogeneous degree it is a
finite linear combination of nested commutators of its arguments with rational
coefficients.  Applying this first to \(\BCH(-X,Y)\) gives a Lie series in
\(X\) and \(Y\).  Multiplication by \(\lambda\) multiplies each homogeneous
component by one power of \(\lambda\).  Applying BCH again to
\[
        X
        \quad\text{and}\quad
        \lambda\,\BCH(-X,Y)
\]
produces another Lie series in \(X\) and \(Y\).  At homogeneous degree \(r\),
the coefficient is obtained from products of at most \(r\) factors involving
\(\lambda\), and hence is a polynomial in \(\lambda\) of degree at most \(r\).

The absence of a quadratic term was established explicitly in
Lemma~\ref{lem:cubic_bch}.  The remainder estimate follows from the local
convergence, or equivalently the asymptotic validity, of the BCH expansion in
a sufficiently small neighborhood of the identity, uniformly for bounded
\(\lambda\).
\end{proof}

A final observation will be useful for interpreting the SIDER2 calculation.
Every nonlinear term in the BCH expansion is built from commutators.  If
\(X\) and \(Y\) are scalar multiples of the same Lie-algebra element, then
\[
        [X,Y]=0,
\]
and all commutator terms vanish.  In that case
\[
        \mathcal S_\lambda(X,Y)=(1-\lambda)X+\lambda Y
\]
exactly along the corresponding one-parameter subgroup.  Thus the
non-affine part of SLERP measures the failure of the two endpoint errors to
lie on the same local geodesic.  This is the quaternion-algebraic analogue of
the curvature correction in normal coordinates, and it is the mechanism that
must be controlled in the high-order SIDER proof.

\section{Filtered divided-error calculus}
\label{sec:filtered_calculus}

The proof of the all-order consistency result is based on a filtered analogue
of the divided-error structure in classical interpolation.  In Euclidean
Neville interpolation, the error is controlled by two elementary facts: the
interpolant agrees with the data at the interpolation nodes, and the
coefficient of each term in the Taylor expansion has a polynomial degree that
is compatible with the order of that term.  The same mechanism is used here,
but the errors are measured in quaternion logarithmic coordinates and the
interpolation operation is SLERP rather than affine interpolation.

This section introduces the bookkeeping needed for the recursive proof.  The
variable
\[
        \xi=\theta-i
\]
measures the location of the evaluation point relative to the left endpoint of
the stencil
\[
        q_i,q_{i+1},\ldots,q_{i+k}.
\]
Thus the local interpolation nodes for a SIDER-\(k\) reconstruction are
\[
        \xi=0,1,\ldots,k.
\]
For \(k\geq0\), define the corresponding nodal product
\[
        \Pi_k(\xi)=\prod_{m=0}^{k}(\xi-m).
\]
The factor \(\Pi_k\) will appear repeatedly, because the interpolation error
of a SIDER-\(k\) reconstruction vanishes at these \(k+1\) nodes.

\begin{definition}[Filtered polynomial series]
\label{def:filtered_series}
A formal series
\[
        C(\theta,\xi,h)
        \sim
        \sum_{r=0}^{\infty}h^r C_r(\theta,\xi)
\]
is called degree-filtered if, for each \(r\), the coefficient
\(C_r(\theta,\xi)\) is a polynomial in \(\xi\) of degree at most \(r\).  The
coefficients of these polynomials may depend smoothly on the local quaternion
jet of the curve \(q\) at the reference value \(q(\theta h)\).
\end{definition}

The dependence on \(\theta\) is treated as smooth but fixed in the degree
count.  In other words, the filtration only measures polynomial dependence on
the shifted stencil variable \(\xi\).  The local jet of \(q\) may enter through
coefficients such as logarithmic velocity, acceleration, and higher
left-trivialized derivatives, but these quantities do not affect the degree in
\(\xi\).

A basic operation in the recursive proof is a shifted finite difference.  The
following elementary lemma records the stability of the filtered class under
such differences.

\begin{lemma}[Filtered finite differences]
\label{lem:filtered_difference}
Let
\[
        C(\theta,\xi,h)
        \sim
        \sum_{r=0}^{\infty}h^r C_r(\theta,\xi)
\]
be degree-filtered.  Suppose that
\[
        C(\theta,\xi,h)-C(\theta,\xi-1,h)=O(h)
\]
as a formal series.  Then
\[
        \frac{C(\theta,\xi,h)-C(\theta,\xi-1,h)}{h}
\]
is again degree-filtered.
\end{lemma}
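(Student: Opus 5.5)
We work coefficientwise on the formal series. Set
\[
        D(\theta,\xi,h)=C(\theta,\xi,h)-C(\theta,\xi-1,h)
        \sim\sum_{r=0}^{\infty}h^r D_r(\theta,\xi),
        \qquad
        D_r(\theta,\xi)=C_r(\theta,\xi)-C_r(\theta,\xi-1).
\]
The hypothesis \(D=O(h)\) as a formal series means exactly that the \(h^0\) coefficient vanishes, \(D_0\equiv0\). (This is automatic in any case: \(C_0\) has degree at most \(0\) in \(\xi\), so it is independent of \(\xi\) and its backward difference is zero.) We may therefore write
\[
        \frac{D(\theta,\xi,h)}{h}\sim\sum_{r=0}^{\infty}h^{r}D_{r+1}(\theta,\xi),
\]
so the coefficient of \(h^r\) in the quotient is \(D_{r+1}\).

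The only thing left to check is the degree bound \(\deg_\xi D_{r+1}\le r\). For this we use the elementary fact that the backward difference \(p(\xi)\mapsto p(\xi)-p(\xi-1)\) sends a polynomial of degree at most \(r+1\) to one of degree at most \(r\). To see it, expand \(p(\xi)=\sum_{j\le r+1}a_j\xi^j\). Then
\[
        \xi^{r+1}-(\xi-1)^{r+1}
\]
has degree \(r\), because the leading terms cancel by the binomial theorem, and each lower monomial \(\xi^j\) with \(j\le r\) produces a difference of degree at most \(j-1\le r-1\). The coefficients \(a_j\) depend smoothly on \(\theta\) and on the local jet of \(q\), and they pass through the difference unchanged. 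So the smooth dependence required in Definition~\ref{def:filtered_series} is preserved, and the degree count, which sees only \(\xi\), is unaffected.

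Combining the two steps, the \(h^r\) coefficient \(D_{r+1}\) of the quotient has \(\xi\)-degree at most \(r\) for every \(r\ge0\). Hence the quotient is degree-filtered.

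There is no substantive obstacle here. The one point requiring care is the index shift: dividing by \(h\) lowers the power of \(h\) by one, and the difference lowers the \(\xi\)-degree by one, so the filtration exactly survives. This pairing is the bookkeeping the recursion relies on later. Because the statement is about formal series, no remainder estimates are needed; any asymptotic version would follow by applying the same argument to the truncated sums.
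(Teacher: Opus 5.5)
Your proposal is correct and follows the paper's proof essentially step for step. In both, the \(h^0\) coefficient of the difference vanishes because \(C_0\) is independent of \(\xi\); the \(h^r\) coefficient of the quotient is then \(C_{r+1}(\theta,\xi)-C_{r+1}(\theta,\xi-1)\), and the backward difference lowers the \(\xi\)-degree from at most \(r+1\) to at most \(r\).
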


\begin{proof}
Since \(C\) is degree-filtered, the leading coefficient
\(C_0(\theta,\xi)\) has degree at most zero in \(\xi\), and therefore is
independent of \(\xi\).  Hence the \(h^0\) term in the numerator vanishes.
After division by \(h\), the coefficient of \(h^r\) is
\[
        C_{r+1}(\theta,\xi)-C_{r+1}(\theta,\xi-1).
\]
The polynomial \(C_{r+1}\) has degree at most \(r+1\).  A finite difference in
\(\xi\) lowers the degree by at least one, so the resulting coefficient has
degree at most \(r\).  This is precisely the degree-filtered condition.
\end{proof}

We will say that the SIDER-\(k\) logarithmic error has a filtered
divided-error form if
\begin{equation}
\label{eq:filtered_divided_error_form}
        E_i^{[k]}(\theta;h)
        =
        h^{k+1}\Pi_k(\xi)\,C_k(\theta,\xi,h),
        \qquad \xi=\theta-i,
\end{equation}
where \(C_k\) is degree-filtered.  This form contains two pieces of
information.  The factor \(h^{k+1}\) gives the expected local order of
accuracy.  The factor \(\Pi_k(\xi)\) encodes the interpolation conditions at
the \(k+1\) nodes.  The remaining filtered coefficient \(C_k\) contains the
higher-order local information.  In particular, its leading term is
independent of \(\xi\), which is the coefficient-compatibility property needed
for the Neville-type cancellation in the next SIDER level.

The following polynomial division lemma will be used to extract nodal factors
from interpolation conditions.

\begin{lemma}[Division by the nodal factor]
\label{lem:hadamard_nodal}
Let
\[
        F(\xi,h)\sim \sum_{r=r_0}^{\infty}h^r F_r(\xi)
\]
be a formal series whose coefficients \(F_r\) are polynomials in \(\xi\).
Assume that
\[
        F(\ell,h)=0,\qquad \ell=0,1,\ldots,k,
\]
as a formal series.  Then
\[
        F(\xi,h)=\Pi_k(\xi)D(\xi,h)
\]
for a formal series \(D\).  If, moreover,
\[
        \deg_\xi F_r\leq r,
\]
then the coefficient of \(h^r\) in \(D\) has degree at most \(r-k-1\).
\end{lemma}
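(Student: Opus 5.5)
The plan is to argue coefficient by coefficient in \(h\), because the statement is purely formal. Write \(F(\xi,h)\sim\sum_{r\ge r_0}h^rF_r(\xi)\). The hypothesis \(F(\ell,h)=0\) as a formal series means that \(F_r(\ell)=0\) for every \(r\) and every \(\ell=0,1,\ldots,k\). Each \(F_r\) is a polynomial in \(\xi\) over a field of coefficients (real numbers, or vectors in \(\ImH\) handled componentwise, with possible smooth dependence on \(\theta\) and the jet). Each \(F_r\) therefore vanishes at the \(k+1\) distinct points \(0,1,\ldots,k\).

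By the factor theorem, applied repeatedly, or equivalently by Euclidean division by the monic polynomial \(\Pi_k\), each \(F_r\) is divisible by \(\Pi_k(\xi)\). We write \(F_r(\xi)=\Pi_k(\xi)D_r(\xi)\), where \(D_r\) is a polynomial that is uniquely determined. Concretely, dividing gives \(F_r=\Pi_kD_r+R_r\) with \(\deg R_r\le k\). Since \(R_r\) vanishes at \(k+1\) distinct nodes, \(R_r\equiv0\). Because the division is performed over the coefficient ring and \(\Pi_k\) is monic with rational integer coefficients, the coefficients of \(D_r\) are linear combinations of those of \(F_r\). They therefore inherit any smooth dependence on \(\theta\) or on the jet of \(q\).

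Next, define the formal series \(D(\xi,h)\sim\sum_{r\ge r_0}h^rD_r(\xi)\). Then \(F=\Pi_kD\) holds termwise, which proves the first claim.

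For the degree statement, assume \(\deg_\xi F_r\le r\). Since \(\deg\Pi_k=k+1\) and \(\Pi_k\) is monic, we have \(\deg D_r=\deg F_r-(k+1)\le r-k-1\) whenever \(F_r\neq0\). In particular \(D_r=0\) when \(r<k+1\): in that case \(F_r\) has degree \(<k+1\) yet vanishes at \(k+1\) points, so \(F_r\equiv0\). This is consistent with the convention that degree at most a negative number means the zero polynomial.

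I do not expect any step to be a real obstacle. The only point needing care is to make precise that "\(F(\ell,h)=0\) as a formal series" means coefficientwise vanishing. This allows the division to be performed separately at each order, with no convergence issues. It is also worth recording, for later use with Lemma~\ref{lem:filtered_difference}, that the reindexed series \(h^{-(k+1)}D\) is then degree-filtered.
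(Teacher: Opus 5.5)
Your proof is correct and follows the same route as the paper's: you read $F(\ell,h)=0$ as coefficientwise vanishing $F_r(\ell)=0$, divide each $F_r$ by the monic degree-$(k+1)$ polynomial $\Pi_k$, and use the degree count to get $\deg D_r\le r-k-1$, with $D_r=0$ for $r<k+1$. Your additions (the remainder-vanishing justification of divisibility, and the remark that $h^{-(k+1)}D$ is degree-filtered) spell out what the paper's shorter proof leaves implicit and what the SIDER2 base estimate later relies on.
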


\begin{proof}
The condition \(F(\ell,h)=0\) for each node means that, for every \(r\),
\[
        F_r(\ell)=0,\qquad \ell=0,1,\ldots,k.
\]
Hence each polynomial \(F_r\) is divisible by
\[
        \Pi_k(\xi)=\prod_{m=0}^{k}(\xi-m).
\]
Dividing coefficient by coefficient gives the factorization
\[
        F(\xi,h)=\Pi_k(\xi)D(\xi,h).
\]
If \(\deg_\xi F_r\leq r\), then division by the degree-\((k+1)\) polynomial
\(\Pi_k\) gives a quotient coefficient of degree at most \(r-k-1\).  When
\(r<k+1\), the corresponding quotient coefficient is zero.
\end{proof}

The preceding lemma is the algebraic reason that interpolation at sufficiently
many nodes enforces high-order consistency.  If the coefficient of \(h^r\) in
an error expansion has degree at most \(r\), and if the error vanishes at
\(k+1\) distinct nodes, then all coefficients with \(r\leq k\) must vanish.
The first possible nonzero term is therefore of order \(h^{k+1}\), and it
must contain the nodal factor \(\Pi_k\).  The remaining task is to show that
the SIDER construction preserves this degree structure.  The base case is
SIDER2, which is established next.

\section{The SIDER2 base estimate}
\label{sec:sider2_base_filtered}

We now prove the base estimate for the recursive argument.  The result shows
that SIDER2 has third-order local accuracy and, more importantly for the
higher-order proof, that its logarithmic error has the filtered divided-error
structure required by \eqref{eq:filtered_divided_error_form}.  This section
also identifies the leading coefficient of the SIDER2 error, which agrees with
the leading coefficient of ordinary quadratic interpolation in logarithmic
coordinates.

Fix
\[
        s=\theta h,
        \qquad
        q_\theta=q(s),
\]
and define the logarithmic data
\[
        X_j(\theta,h)
        =
        \log\bigl(q_\theta^{-1}q_j\bigr)
        =
        \log\bigl(q(\theta h)^{-1}q(jh)\bigr).
\]
If \(q\) is sufficiently smooth, then the left-trivialized logarithmic Taylor
expansion gives, for every fixed truncation order \(M\),
\begin{equation}
\label{eq:log_data_taylor}
        X_j(\theta,h)
        =
        \sum_{\ell=1}^{M}
        \frac{(j-\theta)^\ell h^\ell}{\ell!}\Omega_\ell(\theta)
        +O(h^{M+1}),
\end{equation}
where
\[
        \Omega_\ell(\theta)\in\ImH
\]
are the left logarithmic Taylor coefficients of the curve at \(q(\theta h)\).
For the shifted stencil \(j=i+m\), we have
\[
        j-\theta=m-\xi,
        \qquad
        \xi=\theta-i.
\]
Thus the coefficient of \(h^\ell\) in \(X_{i+m}\) is a polynomial in
\(\xi\) of degree \(\ell\).  This is the source of the degree filtration.

The SIDER2 construction involves five SLERP operations: two extrapolations to
construct the control points, two inner SLERPs, and one final SLERP.  In
left-trivialized logarithmic coordinates, each of these operations is expressed
by the BCH map \(\mathcal S_\lambda\) from Lemma~\ref{lem:bch_slerp}.  Since
\(\mathcal S_\lambda\) has a Lie-polynomial expansion whose coefficients are
polynomial functions of \(\lambda\), the SIDER2 logarithmic error admits a
formal expansion in powers of \(h\).  The next proposition gives the precise
form needed for the induction.

\begin{prop}[Filtered SIDER2 base estimate]
\label{prop:q_sider2_base}
Assume that \(q\) is sufficiently smooth and that the small-stencil condition
in Assumption~\ref{assump:small_stencil_q} holds.  Then the SIDER2 error has
the expansion
\begin{equation}
\label{eq:q_sider2_error}
        E_i^{[2]}(\theta;h)
        =
        h^3\Pi_2(\xi)\,C_2(\theta,\xi,h),
        \qquad \xi=\theta-i,
\end{equation}
where \(C_2\) is degree-filtered.  Moreover,
\[
        C_2(\theta,\xi,h)
        =
        -\frac16\Omega_3(\theta)+O(h).
\]
Consequently,
\[
        d_{\bS^2}\bigl(\gamma(\theta h),P_i^{[2]}(\theta;h)\bigr)
        =
        O(h^3),
\]
uniformly for \(\xi\) in bounded intervals.
\end{prop}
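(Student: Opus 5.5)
The plan is to write every SIDER2 intermediate quantity as a formal series in \(h\) whose \(h^r\) coefficient is a polynomial in \(\xi\) of degree at most \(r\), then use interpolation at \(\xi=0,1,2\) together with Lemma~\ref{lem:hadamard_nodal} to extract \(h^3\Pi_2(\xi)\). Set \(X_m:=X_{i+m}(\theta,h)\) for \(m=0,1,2\). By \eqref{eq:log_data_taylor}, \(X_m=\sum_{\ell\ge1}h^\ell (m-\xi)^\ell\Omega_\ell(\theta)/\ell!\), so the coefficient of \(h^\ell\) has degree \(\ell\) in \(\xi\) and carries no \(h^0\) term. We call this property "filtered of positive order".

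\textbf{Step 1 (closure).} By Lemma~\ref{lem:bch_slerp}, the control points are \(D_a=\mathcal S_2(X_2,X_1)\) and \(D_b=\mathcal S_2(X_0,X_1)\), the inner values are \(U=\mathcal S_{\xi/2}(X_0,D_a)\) and \(V=\mathcal S_{\xi/2}(D_b,X_2)\), and the error is \(E_i^{[2]}=\mathcal S_{\xi/2}(U,V)\).

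\emph{Claim:} if \(X,Y\) are filtered of positive order and \(\lambda\) is either constant or equal to \(\xi/2\), then \(\mathcal S_\lambda(X,Y)\) is again filtered of positive order.

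To prove the claim, use Lemma~\ref{lem:q_formal_structure}. The affine part \((1-\lambda)X+\lambda Y\) contributes degree at most \(1+\ell\) at order \(h^\ell\). At first sight this exceeds the allowed degree \(\ell\), so it needs care. The fix is to track the filtration by weight: give \(h\) weight \(1\) and \(\xi\) weight \(-1\), and call a series admissible when every monomial \(h^r\xi^d\) satisfies \(d\le r\). Each \(X_m\) is admissible, and \(\xi/2\) alone is not. However, \(\mathcal L_r\) has degree \(r\ge3\) in \((X,Y)\) and degree at most \(r\) in \(\lambda\), so it contributes \(h^{\ge r}\) times \(\xi^{\le r}\) times admissible factors, which is admissible.

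The affine part is handled differently. Rewrite it as \(X+\lambda(Y-X)\) and observe that in each actual use the difference \(Y-X\) gains a power of \(h\) matching the extra \(\xi\). For \(U\), the difference \(D_a-X_0\) has leading term \(h(2-\xi+\xi)\Omega_1\cdot\)const, which is \(h\) times a \(\xi\)-independent vector plus admissible higher terms. I would verify this gain by direct expansion: the affine part of \(D_a\) is \(2X_1-X_2\), whose \(h^\ell\) coefficient is \(((1-\xi)^\ell\cdot2-(2-\xi)^\ell)/\ell!\), and so on. This degree bookkeeping is the main obstacle. If the naive claim fails, I would instead fall back on the weaker statement that the \(h^r\) coefficient of \(E_i^{[2]}\) is a polynomial in \(\xi\) of degree at most \(r\), which is exactly what Lemma~\ref{lem:hadamard_nodal} requires. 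I would prove this by noting that \(\theta\)-derivatives do not matter: the whole construction is a smooth function of \(s=\theta h\) and of the data \(q(ih+mh)\). Rewriting \(E\) as \(\log(q(ih+\xi h)^{-1}P)\) and Taylor-expanding jointly in \(h\) and \(\xi h\), each \(h^r\) term comes with at most \(r\) powers of \(\xi\), because \(\xi\) only ever enters either multiplied by \(h\) or through the SLERP parameters \(\xi/2\). The SLERP parameters, in turn, multiply logarithms that are \(O(h)\) and are themselves filtered.

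\textbf{Step 2 (nodal division).} SIDER2 interpolates at \(\xi=0,1,2\), so \(E_i^{[2]}(\ell)=0\) for \(\ell=0,1,2\) as formal series. Lemma~\ref{lem:hadamard_nodal} then gives \(E_i^{[2]}=\Pi_2(\xi)D\), where the \(h^r\) coefficient of \(D\) has degree at most \(r-3\) and vanishes for \(r<3\). Writing \(D=h^3C_2\) yields \eqref{eq:q_sider2_error} with \(C_2\) degree-filtered.

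\textbf{Step 3 (leading coefficient).} At order \(h^3\), Lemma~\ref{lem:cubic_bch} shows that the nonlinear terms contribute only brackets of the \(O(h)\) parts of the arguments. All of these are multiples of \(\Omega_1\), so they commute and the brackets vanish. The \(h^3\) coefficient is therefore that of Euclidean quadratic interpolation applied to the cubic data \((m-\xi)^3\Omega_3/6\). That error equals \(-\Pi_2(\xi)\cdot\frac{1}{6}\Omega_3\), because the interpolation error of \(t\mapsto(m-\xi)^3\) at the point \(m=0\) is \((-1)^3\prod(\xi-m)\). This gives \(C_2=-\frac16\Omega_3+O(h)\).

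Finally, the distance bound follows from \(|E|=d_{\bH_1}\), which is equivalent to \(d_{\bS^2}\) under Assumption~\ref{assump:small_stencil_q}.
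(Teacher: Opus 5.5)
\textbf{Verdict: there is a gap in Step~1.} Your overall plan matches the paper's: establish the degree filtration, divide by \(\Pi_2\) using Lemma~\ref{lem:hadamard_nodal}, then read off the \(h^3\) coefficient from the affine (quadratic Lagrange) part, with the cubic commutators vanishing because the \(\Omega_1\)-parts are collinear. Steps~2 and~3 are fine. Step~1 is the real content of the proposition, and your argument there does not work.

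\textbf{The nonlinear terms are not automatically admissible.} Suppose \(X,Y\) are only filtered of positive order. Then the \(h^s\) coefficient of a product of \(r\) factors already has \(\xi\)-degree up to \(s\), and the factor \(\lambda^j\) with \(j\le r\) raises this to \(s+r\). The powers \(h^{\ge r}\) you count are the same powers that carry the factors' own \(\xi\)-degree. A concrete failure: take \(X=h\xi\,\Omega\), \(Y=h\xi\,\Omega'\) with \([\Omega,\Omega']\neq0\) and \(\lambda=\xi/2\). The cubic term \(\frac{\lambda(1-\lambda)}{12}[Y-X,[X,Y]]\) of Lemma~\ref{lem:cubic_bch} then contributes \(h^3\xi^5\). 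So your Claim fails for the nonlinear part exactly as it does for the affine part. Both parts must be handled with the diagonal structure the paper invokes from Lemma~\ref{lem:filtered_bch_closure}:
\begin{itemize}
\item write \(\mathcal S_\lambda(X,X+\Delta)=X+\lambda\Delta+\mathcal N_\lambda(X,\Delta)\), where each Lie monomial of \(\mathcal N_\lambda\) has \(\lambda\)-degree at most its number of \(\Delta\)-factors;
\item show that at each SLERP with \(\lambda=\xi/2\), the difference \(\Delta\) saves at least one degree in \(\xi\).
\end{itemize}

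\textbf{Your expansion of \(D_a-X_0\) gets this saving wrong.} The \(h^1\) terms of \(D_a\) and \(X_0\) cancel, since \(2(1-\xi)-(2-\xi)=-\xi\). In fact \(D_a-X_0=-h^2\Omega_2+\cdots\) and \(X_2-D_b=h^2\Omega_2+\cdots\) are second differences, whose \(h^\ell\) coefficients have degree at most \(\ell-2\). The nonlinear part of \(\mathcal S_2\) also saves two degrees, because it is at least quadratic in \(X_1-X_2\) (resp.\ \(X_1-X_0\)). This double saving is what makes \(U-X_0\), \(V-D_b\) and \(D_b-X_0\), and hence \(V-U\), save one degree. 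That is exactly what the final step \(\mathcal S_{\xi/2}(U,V)\) needs, and your proposal never addresses this last SLERP. With your stated leading term \(h\cdot\mathrm{const}\cdot\Omega_1\), the difference \(U-X_0\) would contain a multiple of \(\xi h\,\Omega_1\) with no saving, and the bookkeeping would break at the last SLERP.

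\textbf{The fallback needs a stronger condition than you state.} Expanding jointly in \(h\) and \(\xi h\) is a sound reformulation, since power series in \((h,\xi h)\) are exactly the degree-filtered series. But ``logarithms that are \(O(h)\) and themselves filtered'' is not the right condition, because \(\frac{\xi}{2}\) times a filtered \(O(h)\) series need not be filtered. You need each \(\log(Q_A^{-1}Q_B)\) at a \(\xi/2\)-step to vanish at \(h=0\) with \(\xi h\) held fixed.
\begin{itemize}
\item For the two inner SLERPs this is automatic once \(ih\) is held fixed.
\item For the final SLERP it means \(U\) and \(V\) have the same limit. That rests on \(\log(q_i^{-1}d_a)=O(h^2)\) and \(\log(d_b^{-1}q_{i+2})=O(h^2)\), which your proposal does not check.
\end{itemize}
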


\begin{proof}
We work in logarithmic coordinates based at \(q_\theta\).  After left
translation by \(q_\theta^{-1}\), the exact value becomes the identity
quaternion and the data on the stencil are
\[
        \exp X_i,\qquad
        \exp X_{i+1},\qquad
        \exp X_{i+2}.
\]
The SIDER2 formula \eqref{eq:q_sider2} is a finite composition of the map
\(\mathcal S_\lambda\), with parameter \(\lambda=2\) for the two
control-point extrapolations and parameter \(\lambda=\xi/2\) for the three
B\'ezier-type SLERP operations.  Since BCH is analytic near the identity, this
composition has a formal expansion in powers of \(h\).

We first determine the leading term.  Keeping only the affine part of every
SLERP operation gives exactly the Euclidean quadratic interpolation formula
applied to the logarithmic data
\[
        X_i,\quad X_{i+1},\quad X_{i+2},
\]
evaluated at the local coordinate \(\xi\).  The exact value in these
coordinates is zero.  The affine contribution therefore reproduces the
linear and quadratic terms in \eqref{eq:log_data_taylor}.  Its first possible
nonzero contribution is the ordinary quadratic interpolation remainder:
\[
        -\frac{h^3}{6}
        \Pi_2(\xi)\Omega_3(\theta).
\]

It remains to check that the first nonlinear BCH correction does not add
another term of order \(h^3\).  By Lemma~\ref{lem:cubic_bch}, the first
non-affine term in a SLERP operation is cubic and is built from commutators of
the endpoint logarithms.  At order \(h^3\), this cubic expression only sees
the first-order parts of the logarithmic data:
\[
        X_{i+m}^{(1)}
        =
        (m-\xi)h\,\Omega_1(\theta),
        \qquad m=0,1,2.
\]
These vectors are all scalar multiples of the same Lie-algebra element
\(\Omega_1(\theta)\).  Hence their commutators vanish.  Therefore the cubic
BCH correction contributes no \(h^3\) term to the SIDER2 error.  We obtain
\[
        E_i^{[2]}(\theta;h)
        =
        -\frac{h^3}{6}
        \Pi_2(\xi)\Omega_3(\theta)
        +O(h^4).
\]

We next record the full filtered structure.  The logarithmic Taylor expansion
\eqref{eq:log_data_taylor} gives coefficients of degree at most their
corresponding power of \(h\).  The BCH representation expresses each SLERP
operation as a finite sum, at each homogeneous order, of nested commutators of
the endpoint logarithms.  The only possible source of extra powers of \(\xi\)
is the parameter \(\lambda=\xi/2\) in the inner and final SLERP operations.
For the SIDER2 control-point geometry, these powers are paired with diagonal
differences of the corresponding endpoint logarithms.  Those differences have
one degree saving in \(\xi\), exactly as in the diagonal-variable argument used
later in Lemma~\ref{lem:filtered_bch_closure}.  Hence the coefficient of
\(h^r\) in the SIDER2 error is a polynomial in \(\xi\) of degree at most
\(r\).  This is the degree filtration in
Definition~\ref{def:filtered_series}.

The SIDER2 reconstruction interpolates the three stencil nodes.  Hence
\[
        E_i^{[2]}(\theta;h)=0
        \qquad\text{for}\qquad
        \xi=0,1,2.
\]
Applying Lemma~\ref{lem:hadamard_nodal} with \(k=2\), the error is divisible
coefficient by coefficient by
\[
        \Pi_2(\xi)=\xi(\xi-1)(\xi-2).
\]
Thus
\[
        E_i^{[2]}(\theta;h)=\Pi_2(\xi)D_2(\theta,\xi,h).
\]
Since the first nonzero term begins at order \(h^3\), we may write
\[
        D_2(\theta,\xi,h)=h^3 C_2(\theta,\xi,h).
\]
The degree estimate in Lemma~\ref{lem:hadamard_nodal} shows that \(C_2\) is
degree-filtered.  The leading coefficient is obtained from the leading term
already computed:
\[
        C_2(\theta,\xi,h)
        =
        -\frac16\Omega_3(\theta)+O(h).
\]
This proves \eqref{eq:q_sider2_error}.

Finally, inside the common normal neighborhood, the spherical interpolation
error is locally equivalent to the norm of the quaternion logarithmic error.
Equivalently,
\[
        d_{\bS^2}\bigl(\gamma(\theta h),P_i^{[2]}(\theta;h)\bigr)
        \leq C\left|E_i^{[2]}(\theta;h)\right|
\]
for \(h\) sufficiently small.  Since \(\Pi_2(\xi)\) is bounded for \(\xi\) in
any fixed bounded interval, the \(O(h^3)\) distance estimate follows.
\end{proof}

\begin{remark}
The proposition explains why SIDER2 has the same leading error as ordinary
quadratic interpolation.  The affine part of the construction is precisely the
quadratic interpolant in logarithmic coordinates.  The first nonlinear
correction introduced by SLERP is commutator-valued, and at the lowest
relevant order the data are collinear multiples of the same tangent element
\(\Omega_1(\theta)\).  Thus this correction vanishes at order \(h^3\).
Curvature-dependent and noncommutative effects enter at higher orders, where
they are absorbed into the filtered coefficient \(C_2\).
\end{remark}

\section{The recursive order-raising step}
\label{sec:recursive_order_raising}

We now prove the mechanism by which the SIDER recursion increases the local
order by one.  This is the quaternion--BCH analogue of the classical
order-raising step in Neville interpolation.  The key point is that, once two
adjacent lower-level SIDER candidates have already been written in filtered
divided-error form, the final SLERP operation between them behaves affinely to
the order needed for the next cancellation.  The affine part then produces a
finite difference of the divided-error coefficient.  Since the leading
coefficient of a degree-filtered series is independent of the shifted stencil
variable, this finite difference contributes one additional factor of \(h\).

There are two ingredients in the proof.  The first is the explicit
Neville-type cancellation in the affine part of SLERP.  This part is purely
algebraic and produces the new nodal factor \(\Pi_k(\xi)\).  The second is the
control of the nonlinear BCH remainder.  Although this remainder is generated
by the noncommutativity of the quaternion algebra, it automatically preserves
the sharp filtered degree structure.  The reason is that the nonlinear part of
SLERP is at least quadratic in the diagonal difference between the two inputs.
This gives exactly the degree saving needed to compensate for the powers of
the interpolation parameter \(\lambda=\xi/k\).

We first prove this filtered closure property.

\begin{lemma}[Filtered closure of the nonlinear BCH remainder]
\label{lem:filtered_bch_closure}
Let \(k\geq3\).  Suppose
\[
        X(\xi,h)
        =
        h^k\Pi_{k-1}(\xi)C(\theta,\xi,h),
\]
and
\[
        Y(\xi,h)
        =
        h^k\Pi_{k-1}(\xi-1)C(\theta,\xi-1,h),
\]
where \(C\) is degree-filtered.  Set
\[
        \lambda=\frac{\xi}{k}
\]
and define the nonlinear SLERP remainder by
\[
        R_\lambda(X,Y)
        =
        \mathcal S_\lambda(X,Y)
        -
        \bigl((1-\lambda)X+\lambda Y\bigr).
\]
Then
\[
        R_{\xi/k}(X,Y)
        =
        h^{k+1}\Pi_k(\xi)\mathcal R_k(\theta,\xi,h),
\]
where \(\mathcal R_k\) is degree-filtered.
\end{lemma}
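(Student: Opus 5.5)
The plan is to reduce the statement to Lemma~\ref{lem:hadamard_nodal}. It suffices to establish two facts about the formal series \(R_{\xi/k}(X,Y)\): (i) it vanishes at the nodes \(\xi=0,1,\ldots,k\), and (ii) its coefficient of \(h^r\) is a polynomial in \(\xi\) of degree at most \(r\). Granting these, Lemma~\ref{lem:hadamard_nodal} gives \(R_{\xi/k}(X,Y)=\Pi_k(\xi)D(\xi,h)\), where the \(h^r\)-coefficient of \(D\) has degree at most \(r-k-1\) and vanishes for \(r<k+1\). Writing \(D=h^{k+1}\mathcal R_k\) then gives exactly the degree-filtered \(\mathcal R_k\). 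Lemma~\ref{lem:q_affine_stability} even shows \(R=O(h^{3k})\), so the factor \(h^{k+1}\) costs nothing.

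The nodal vanishing (i) is the easy step. At \(\xi=0\) we have \(\lambda=0\), so \(\mathcal S_0(X,Y)=X\) exactly and \(R=0\). At \(\xi=k\) we have \(\lambda=1\), so \(\mathcal S_1(X,Y)=\BCH(X,\BCH(-X,Y))=Y\) and again \(R=0\). For \(\xi=1,\ldots,k-1\), both \(\Pi_{k-1}(\xi)\) and \(\Pi_{k-1}(\xi-1)\) vanish. Hence \(X=Y=0\) there, and \(R_\lambda(0,0)=0\). Each of these holds coefficientwise in \(h\).

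For the degree count (ii), I will change variables to the diagonal pair \((X,D)\) with \(D=Y-X\).
\begin{itemize}
\item The \(h^{k+j}\)-coefficient of \(X\) is \(\Pi_{k-1}(\xi)\) times a polynomial of degree at most \(j\), so it has degree at most \(k+j\): the filtration is sharp.
\item \(D\) is \(h^k\) times the backward difference of \(F(\xi)=\Pi_{k-1}(\xi)C(\theta,\xi,h)\). Since finite differences lower degree, its \(h^{k+j}\)-coefficient has degree at most \(k+j-1\). So every factor of \(D\) saves one degree.
\end{itemize}

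The key structural claim is that, in the expansion of \(\mathcal S_\lambda(X,X+D)\) into Lie monomials in \(X\) and \(D\) with polynomial coefficients in \(\lambda\), every monomial carrying \(\lambda^p\) contains at least \(p\) factors of \(D\). To see this, write \(\mathcal S_\lambda=\BCH(X,\lambda Z)\) with \(Z=\BCH(-X,X+D)\). Since \(Z\) vanishes at \(D=0\), every Lie monomial of \(Z\) contains at least one \(D\). In \(\BCH(X,\lambda Z)\), the factor \(\lambda\) enters only through copies of \(Z\), so \(\lambda^p\) comes with at least \(p\) copies of \(Z\), hence at least \(p\) copies of \(D\).

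Moreover, the part of \(\mathcal S_\lambda\) that is linear in \(D\) is exactly \(X+\lambda D\). The reason is that the differential of \(\log(\exp(-X)\exp(X+\cdot))\) at \(0\) and the differential of \(\log(\exp X\exp(\cdot))\) at \(0\) are mutually inverse linear maps. Consequently every monomial of \(R\) has \(D\)-degree \(a\geq\max(p,2)\). Such a monomial of total Lie degree \(n\geq3\) contributes at order \(h^{nk+j}\) a coefficient of degree at most \(nk+j-a+p\leq nk+j\). This proves (ii).

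I expect the main obstacle to be making this \(\lambda\)-versus-\(D\) pairing rigorous for the full series, not just the cubic term of Lemma~\ref{lem:cubic_bch}. It requires checking that no \(\lambda\)-power arises without a matching \(D\), including from the \(\BCH\) coefficients themselves. It also requires checking that the rearrangement of \(\mathcal S_\lambda\) into Lie monomials in \(X\) and \(D\) is compatible with the truncation \(O((|X|+|Y|)^{M+1})\) of Lemma~\ref{lem:q_formal_structure}. Both should follow by expanding \(\BCH(X,\lambda Z)\) homogeneously in its second argument and noting that each degree-\(n\) piece in \(\lambda Z\) is \(\lambda^n\) times a Lie polynomial in \(Z\).
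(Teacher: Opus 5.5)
Your proposal is correct and follows essentially the same route as the paper's proof. The shared steps are: the diagonal variable $\Delta=Y-X$; the vanishing of $Z=\BCH(-X,X+\Delta)$ at $\Delta=0$, which pairs every power of $\lambda$ with a $\Delta$-factor; the linear part $X+\lambda\Delta$ read off from $\BCH(X,Z(X,\Delta))=X+\Delta$; the one-degree saving per $\Delta$-factor; nodal vanishing at $\xi=0,1,\ldots,k$; and division by $\Pi_k(\xi)$. The one minor difference is that you get the factor $h^{k+1}$ directly from the degree bound in Lemma~\ref{lem:hadamard_nodal}, which forces the coefficients with $r<k+1$ to vanish, whereas the paper uses the cubic estimate $R_{\xi/k}(X,Y)=O(h^{3k})$; both are valid.
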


\begin{proof}
We use the diagonal variable
\[
        \Delta=Y-X.
\]
Thus \(Y=X+\Delta\), and
\[
        R_\lambda(X,Y)
        =
        \mathcal S_\lambda(X,X+\Delta)-X-\lambda\Delta .
\]
Define
\[
        \mathcal N_\lambda(X,\Delta)
        =
        \mathcal S_\lambda(X,X+\Delta)-X-\lambda\Delta .
\]
Then
\[
        R_\lambda(X,Y)=\mathcal N_\lambda(X,Y-X).
\]

We first describe the formal structure of \(\mathcal N_\lambda\).  Let
\[
        Z(X,\Delta)=\BCH(-X,X+\Delta).
\]
Since
\[
        Z(X,0)=\BCH(-X,X)=0,
\]
every term in the formal expansion of \(Z(X,\Delta)\) contains at least one
factor of \(\Delta\).  Moreover,
\[
        \mathcal S_\lambda(X,X+\Delta)
        =
        \BCH\bigl(X,\lambda Z(X,\Delta)\bigr).
\]
The constant term in \(\Delta\) is \(X\).  The term linear in \(\Delta\) is
\(\lambda\Delta\).  To see this, observe that
\[
        \BCH\bigl(X,Z(X,\varepsilon\Delta)\bigr)
        =
        X+\varepsilon\Delta
\]
for \(\varepsilon\) sufficiently small, because
\[
        \exp X\exp Z(X,\varepsilon\Delta)
        =
        \exp(X+\varepsilon\Delta).
\]
Differentiating this identity at \(\varepsilon=0\) gives
\[
        D_2\BCH(X,0)\,Z_\varepsilon(X,0)=\Delta .
\]
Therefore
\[
        \frac{d}{d\varepsilon}
        \BCH\bigl(X,\lambda Z(X,\varepsilon\Delta)\bigr)
        \bigg|_{\varepsilon=0}
        =
        \lambda\Delta .
\]
After subtracting \(X+\lambda\Delta\), every term in
\(\mathcal N_\lambda(X,\Delta)\) contains at least two factors of
\(\Delta\).

We also need to track the dependence on \(\lambda\).  Each occurrence of
\(\lambda\) in
\[
        \BCH\bigl(X,\lambda Z(X,\Delta)\bigr)
\]
comes from an occurrence of the second BCH argument
\(\lambda Z(X,\Delta)\).  Since each factor \(Z(X,\Delta)\) contains at least
one factor of \(\Delta\), the degree in \(\lambda\) of any Lie monomial in
\(\mathcal N_\lambda\) is no larger than the number of its \(\Delta\)-factors.

We now apply this structure to the adjacent SIDER errors.  Since \(C\) is
degree-filtered, the coefficient of \(h^{k+r}\) in
\[
        X=h^k\Pi_{k-1}(\xi)C(\theta,\xi,h)
\]
has degree at most \(k+r\) in \(\xi\).  On the other hand,
\[
\begin{aligned}
        \Delta
        &=
        Y-X  \\
        &=
        h^k
        \left[
        \Pi_{k-1}(\xi-1)C(\theta,\xi-1,h)
        -
        \Pi_{k-1}(\xi)C(\theta,\xi,h)
        \right].
\end{aligned}
\]
The coefficient of \(h^{k+r}\) in \(\Delta\) has degree at most \(k+r-1\).
Indeed, \(\Pi_{k-1}(\xi-1)\) and \(\Pi_{k-1}(\xi)\) are monic polynomials of
degree \(k\), while \(C_r(\theta,\xi-1)\) and \(C_r(\theta,\xi)\) have the
same leading coefficient.  Hence the highest-degree terms cancel in the
difference.

Consider any Lie monomial appearing in
\[
        \mathcal N_{\xi/k}(X,\Delta).
\]
Suppose it contributes to the coefficient of \(h^s\), and suppose it contains
\(b\) factors of \(\Delta\).  Before including the powers of
\(\lambda=\xi/k\), the degree in \(\xi\) is at most
\[
        s-b,
\]
because each \(\Delta\)-factor gives one degree saving.  The degree in
\(\lambda\) is at most \(b\), so substituting \(\lambda=\xi/k\) raises the
degree in \(\xi\) by at most \(b\).  Therefore the total degree of the
coefficient of \(h^s\) is at most
\[
        (s-b)+b=s.
\]
Thus every coefficient of \(R_{\xi/k}(X,Y)\) has polynomial degree at most its
power of \(h\).

It remains to extract the nodal factor.  The nonlinear remainder vanishes at
all nodes
\[
        \xi=0,1,\ldots,k.
\]
At \(\xi=0\), we have \(\lambda=0\), so
\[
        \mathcal S_0(X,Y)=X
\]
and hence \(R_\lambda(X,Y)=0\).  At \(\xi=k\), we have \(\lambda=1\), so
\[
        \mathcal S_1(X,Y)=Y
\]
and again \(R_\lambda(X,Y)=0\).  For the interior nodes
\(\xi=1,\ldots,k-1\), both nodal factors
\[
        \Pi_{k-1}(\xi)
        \quad\text{and}\quad
        \Pi_{k-1}(\xi-1)
\]
vanish.  Hence \(X=Y=0\), and the nonlinear remainder also vanishes.

Therefore each coefficient in the formal expansion of \(R_{\xi/k}(X,Y)\) is
divisible by
\[
        \Pi_k(\xi)=\prod_{m=0}^{k}(\xi-m).
\]
Since the coefficient of \(h^s\) has degree at most \(s\), division by the
degree-\((k+1)\) polynomial \(\Pi_k\) gives a quotient coefficient of degree
at most
\[
        s-k-1.
\]

Finally, the nonlinear BCH remainder starts at cubic order in its inputs.
Since \(X,Y=O(h^k)\), we have
\[
        R_{\xi/k}(X,Y)=O(h^{3k}).
\]
In particular, for \(k\geq3\), this is of order at least \(h^{k+1}\).  Thus we
may write
\[
        R_{\xi/k}(X,Y)
        =
        h^{k+1}\Pi_k(\xi)\mathcal R_k(\theta,\xi,h).
\]
The coefficient of \(h^r\) in \(\mathcal R_k\) comes from the coefficient of
\(h^{r+k+1}\) in \(R_{\xi/k}\), and therefore has degree at most \(r\).  Hence
\(\mathcal R_k\) is degree-filtered.
\end{proof}

We now prove the recursive order-raising step.

\begin{lemma}[Order raising by the SIDER recursion]
\label{lem:order_raising}
Let \(k\geq3\).  Suppose that the two adjacent SIDER-\((k-1)\) candidates
have filtered divided-error representations
\[
        E_i^{[k-1]}(\theta;h)
        =
        h^k\Pi_{k-1}(\xi)C_{k-1}(\theta,\xi,h),
\]
and
\[
        E_{i+1}^{[k-1]}(\theta;h)
        =
        h^k\Pi_{k-1}(\xi-1)C_{k-1}(\theta,\xi-1,h),
\]
where
\[
        \xi=\theta-i
\]
and \(C_{k-1}\) is degree-filtered.  Then the SIDER-\(k\) reconstruction
satisfies
\[
        E_i^{[k]}(\theta;h)
        =
        h^{k+1}\Pi_k(\xi)C_k(\theta,\xi,h),
\]
where \(C_k\) is also degree-filtered.
\end{lemma}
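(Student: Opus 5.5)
The plan is to write the SIDER-\(k\) error through Lemma~\ref{lem:bch_slerp}, split it into an affine part and a nonlinear remainder, and treat the two pieces separately. With \(X=E_i^{[k-1]}\), \(Y=E_{i+1}^{[k-1]}\) and \(\lambda=\xi/k\), the recursion \eqref{eq:q_sider_rec} and Lemma~\ref{lem:bch_slerp} give
\[
E_i^{[k]}(\theta;h)=\mathcal S_{\xi/k}(X,Y)
=\Bigl(1-\frac{\xi}{k}\Bigr)X+\frac{\xi}{k}Y+R_{\xi/k}(X,Y).
\]
The hypotheses of the lemma are exactly those of Lemma~\ref{lem:filtered_bch_closure} with \(C=C_{k-1}\). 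So that lemma gives \(R_{\xi/k}(X,Y)=h^{k+1}\Pi_k(\xi)\mathcal R_k\) with \(\mathcal R_k\) degree-filtered, and what remains is the affine part.

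For the affine part, the identities \(\Pi_{k-1}(\xi)=\Pi_k(\xi)/(\xi-k)\) and \(\Pi_{k-1}(\xi-1)=\Pi_k(\xi)/\xi\) give
\[
\Bigl(1-\frac{\xi}{k}\Bigr)X+\frac{\xi}{k}Y
=h^k\,\frac{\Pi_k(\xi)}{k}\bigl[C_{k-1}(\theta,\xi-1,h)-C_{k-1}(\theta,\xi,h)\bigr].
\]
This is the Neville cancellation. Since \(C_{k-1}\) is degree-filtered, its \(h^0\) coefficient is independent of \(\xi\), so the bracket is \(O(h)\). Lemma~\ref{lem:filtered_difference} then shows that \(\bigl[C_{k-1}(\theta,\xi,h)-C_{k-1}(\theta,\xi-1,h)\bigr]/h\) is degree-filtered. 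The affine part therefore equals \(h^{k+1}\Pi_k(\xi)\,\widetilde C(\theta,\xi,h)\), where \(\widetilde C=-\frac1k\,\bigl[C_{k-1}(\theta,\xi,h)-C_{k-1}(\theta,\xi-1,h)\bigr]/h\) is degree-filtered.

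Setting \(C_k=\widetilde C+\mathcal R_k\) completes the argument, because a sum of degree-filtered series is degree-filtered. As a consistency check, \(E_i^{[k]}\) vanishes at \(\xi=0,\dots,k\): at the endpoints this holds because \(\lambda=0\) or \(\lambda=1\), and at interior nodes because \(X=Y=0\). This agrees with the nodal factor \(\Pi_k\) appearing in both pieces.

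I expect the delicate point to be the nonlinear remainder, which is already handled by Lemma~\ref{lem:filtered_bch_closure}. The concern is that the powers of \(\lambda=\xi/k\) might break the degree filtration, and they do not because each power of \(\lambda\) comes paired with a factor of \(\Delta=Y-X\), which lowers the degree in \(\xi\) by one. Within the present lemma the only thing to check carefully is that the algebraic identity for the affine part is exact, with no \(O(h^{k})\) leftover. It is exact because the hypotheses give both adjacent errors through the same coefficient series \(C_{k-1}\) evaluated at \(\xi\) and \(\xi-1\). This shift-invariance is the essential compatibility assumption, and it must be recorded as part of the conclusion so that the induction can continue to level \(k+1\).
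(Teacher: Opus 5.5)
Your proposal is correct and follows the paper's proof essentially step for step. You split $\mathcal S_{\xi/k}$ into its affine part plus the remainder $R_{\xi/k}$, and you use the Neville identities $(1-\xi/k)\Pi_{k-1}(\xi)=-\frac1k\Pi_k(\xi)$ and $\frac{\xi}{k}\Pi_{k-1}(\xi-1)=\frac1k\Pi_k(\xi)$ to reduce the affine part to a finite difference of $C_{k-1}$, which Lemma~\ref{lem:filtered_difference} handles, while the nonlinear remainder is delegated to Lemma~\ref{lem:filtered_bch_closure}, exactly as the paper does. Your further point that the shared series $C_{k-1}$ at $\xi$ and $\xi-1$ is what makes the bracket $O(h)$, and must be carried through the induction, matches the role the paper assigns to this shift-invariance in its concluding remarks.
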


\begin{proof}
By the recursive SIDER formula \eqref{eq:q_sider_rec} and the BCH
representation of SLERP,
\[
        E_i^{[k]}
        =
        \mathcal S_{\xi/k}
        \bigl(E_i^{[k-1]},E_{i+1}^{[k-1]}\bigr).
\]
Set
\[
        X=E_i^{[k-1]},
        \qquad
        Y=E_{i+1}^{[k-1]},
        \qquad
        \lambda=\frac{\xi}{k}.
\]
Then
\[
        E_i^{[k]}
        =
        (1-\lambda)X+\lambda Y
        +
        R_\lambda(X,Y),
\]
where
\[
        R_\lambda(X,Y)
        =
        \mathcal S_\lambda(X,Y)
        -
        \bigl((1-\lambda)X+\lambda Y\bigr)
\]
is the nonlinear BCH remainder.

We first compute the affine part.  Substituting the divided-error forms gives
\[
\begin{aligned}
(1-\lambda)X+\lambda Y
&=
h^k
\left[
\left(1-\frac{\xi}{k}\right)
\Pi_{k-1}(\xi)C_{k-1}(\theta,\xi,h)
\right. \\
&\qquad\qquad\left.
+
\frac{\xi}{k}
\Pi_{k-1}(\xi-1)C_{k-1}(\theta,\xi-1,h)
\right].
\end{aligned}
\]
Using
\[
        \Pi_{k-1}(\xi)
        =
        \xi(\xi-1)\cdots(\xi-k+1),
\]
and
\[
        \Pi_{k-1}(\xi-1)
        =
        (\xi-1)(\xi-2)\cdots(\xi-k),
\]
we obtain
\[
        \left(1-\frac{\xi}{k}\right)\Pi_{k-1}(\xi)
        =
        -\frac1k\Pi_k(\xi),
\]
and
\[
        \frac{\xi}{k}\Pi_{k-1}(\xi-1)
        =
        \frac1k\Pi_k(\xi).
\]
Therefore
\begin{equation}
\label{eq:finite_difference_factor}
(1-\lambda)X+\lambda Y
=
-\frac{h^k}{k}\Pi_k(\xi)
\left[
C_{k-1}(\theta,\xi,h)
-
C_{k-1}(\theta,\xi-1,h)
\right].
\end{equation}

Because \(C_{k-1}\) is degree-filtered, its leading coefficient
\(C_{k-1,0}(\theta,\xi)\) has degree at most zero in \(\xi\), and hence is
independent of \(\xi\).  Consequently,
\[
        C_{k-1}(\theta,\xi,h)
        -
        C_{k-1}(\theta,\xi-1,h)
        =
        O(h).
\]
By Lemma~\ref{lem:filtered_difference},
\[
        \frac{
        C_{k-1}(\theta,\xi,h)
        -
        C_{k-1}(\theta,\xi-1,h)}
        {h}
\]
is degree-filtered.  Thus the affine contribution in
\eqref{eq:finite_difference_factor} has the form
\[
        h^{k+1}\Pi_k(\xi)\widetilde C_k(\theta,\xi,h),
\]
where
\[
        \widetilde C_k(\theta,\xi,h)
        =
        -\frac1{k}
        \frac{
        C_{k-1}(\theta,\xi,h)
        -
        C_{k-1}(\theta,\xi-1,h)}
        {h}
\]
is degree-filtered.

It remains to include the nonlinear BCH remainder.  By
Lemma~\ref{lem:filtered_bch_closure},
\[
        R_{\xi/k}
        \bigl(E_i^{[k-1]},E_{i+1}^{[k-1]}\bigr)
        =
        h^{k+1}\Pi_k(\xi)\mathcal R_k(\theta,\xi,h),
\]
where \(\mathcal R_k\) is degree-filtered.  Combining the affine and nonlinear
parts gives
\[
        E_i^{[k]}(\theta;h)
        =
        h^{k+1}\Pi_k(\xi)
        \left[
        \widetilde C_k(\theta,\xi,h)
        +
        \mathcal R_k(\theta,\xi,h)
        \right].
\]
Defining
\[
        C_k(\theta,\xi,h)
        =
        \widetilde C_k(\theta,\xi,h)
        +
        \mathcal R_k(\theta,\xi,h),
\]
we obtain the desired representation
\[
        E_i^{[k]}(\theta;h)
        =
        h^{k+1}\Pi_k(\xi)C_k(\theta,\xi,h).
\]
Since both \(\widetilde C_k\) and \(\mathcal R_k\) are degree-filtered, so is
\(C_k\).
\end{proof}

\begin{remark}
Equation \eqref{eq:finite_difference_factor} is the essential Neville-type
cancellation in the quaternion--BCH proof.  The two shifted nodal factors
combine to produce the larger nodal product \(\Pi_k(\xi)\), while the
divided-error coefficient appears only through the finite difference
\[
        C_{k-1}(\theta,\xi,h)
        -
        C_{k-1}(\theta,\xi-1,h).
\]
Because the leading coefficient of a degree-filtered series is independent of
\(\xi\), this difference supplies one additional factor of \(h\).  The
recursive SIDER step therefore raises the order from \(h^k\) to \(h^{k+1}\).

The nonlinear BCH terms do not obstruct this cancellation.  Written in the
diagonal variable \(\Delta=Y-X\), the nonlinear remainder contains at least two
factors of \(\Delta\).  Each such factor gives a degree saving in the shifted
variable, and these savings compensate for the powers of
\(\lambda=\xi/k\).  This is why the nonlinear BCH remainder preserves the
sharp filtered divided-error structure required for the induction.
\end{remark}

\section{All-order local consistency}
\label{sec:all_order_consistency}

We now combine the SIDER2 base estimate with the recursive order-raising
lemma.  The proof is an induction on the interpolation level.  The base case is
the filtered divided-error form for SIDER2, and the induction step is the
quaternion--BCH order-raising result proved in
Lemma~\ref{lem:order_raising}.  The theorem is stated for fixed \(n\).  The
constants in the estimates may depend on \(n\), on finitely many derivatives
of the curve, and on the size of the common normal neighborhood, but they are
independent of \(h\) for \(h\) sufficiently small.

\begin{thm}[Local consistency of SIDER-\(n\)]
\label{thm:q_all_order}
Let \(n\geq2\) be fixed.  Suppose that the curve \(q(t)=(0,\gamma(t))\) is
sufficiently smooth on a neighborhood of the interpolation stencil and that
the small-stencil condition in Assumption~\ref{assump:small_stencil_q} holds
for all data points and intermediate values generated by the SIDER-\(n\)
construction.  Then the logarithmic error of the SIDER-\(n\) reconstruction
has the filtered divided-error representation
\begin{equation}
\label{eq:q_all_order_divided_error}
        E_i^{[n]}(\theta;h)
        =
        h^{n+1}\Pi_n(\xi)C_n(\theta,\xi,h),
        \qquad
        \xi=\theta-i,
\end{equation}
where \(C_n\) is degree-filtered.  In particular, \(C_n\) is bounded for
\(\xi\) in bounded intervals.  Consequently,
\begin{equation}
\label{eq:q_all_order_distance}
        d_{\bS^2}\bigl(\gamma(\theta h),P_i^{[n]}(\theta;h)\bigr)
        =
        O(h^{n+1}),
        \qquad
        i\leq\theta\leq i+n .
\end{equation}
\end{thm}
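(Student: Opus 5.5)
The plan is induction on the level \(k\), from \(k=2\) up to \(k=n\), with the filtered divided-error form \eqref{eq:filtered_divided_error_form} as the inductive hypothesis. The hypothesis must hold for every stencil shift \(i\), with a coefficient series \(C_k\) that is the same function of \((\theta,\xi,h)\) for every \(i\).

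\textbf{Base case.} For \(k=2\), Proposition~\ref{prop:q_sider2_base} gives \(E_i^{[2]}=h^3\Pi_2(\xi)C_2(\theta,\xi,h)\) with \(C_2\) degree-filtered. The next point is to check that \(C_2\) depends on \(i\) only through \(\xi=\theta-i\) and on \(\theta\) through the jet \(\Omega_\ell(\theta)\). This follows from \eqref{eq:log_data_taylor}: the data \(X_{i+m}\) depend only on \(m-\xi\) and \(\Omega_\ell(\theta)\). The SIDER2 composition is a universal function of these data and of \(\xi\).

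\textbf{Shift compatibility.} The key bookkeeping step, which I expect to be the main obstacle, is this. For the neighbouring stencil \(i+1\), the local variable is \(\xi'=\theta-(i+1)=\xi-1\), while the base point \(q_\theta\) is unchanged. Hence
\[
E_{i+1}^{[k-1]}(\theta;h)=h^k\Pi_{k-1}(\xi-1)\,C_{k-1}(\theta,\xi-1,h),
\]
with the \emph{same} series \(C_{k-1}\). This is exactly the hypothesis of Lemma~\ref{lem:order_raising}, and it is the shift-invariance of the divided-error coefficient mentioned in the abstract. I would argue it inductively. If \(E_i^{[k-1]}\) is a universal function \(F_{k-1}(\xi;\{\Omega_\ell(\theta)\},h)\) of \(\xi\) and the jet, then so is \(E_i^{[k]}=\mathcal S_{\xi/k}(F_{k-1}(\xi),F_{k-1}(\xi-1))\). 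Since the quotient by \(h^{k+1}\Pi_k(\xi)\) is unique coefficientwise, \(C_k\) inherits this universality.

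\textbf{Inductive step.} For \(3\le k\le n\), Lemma~\ref{lem:order_raising} applies to the pair \(E_i^{[k-1]},E_{i+1}^{[k-1]}\). It yields \(E_i^{[k]}=h^{k+1}\Pi_k(\xi)C_k\) with \(C_k\) degree-filtered, and by the universality just noted the same holds for every shift. Stopping at \(k=n\) gives \eqref{eq:q_all_order_divided_error}.

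\textbf{From formal to actual bounds.} The formal series must be turned into a genuine estimate. Since \(n\) is fixed, I would truncate every expansion (Taylor data, BCH via Lemma~\ref{lem:q_formal_structure}) at order \(M=n+1\). Assumption~\ref{assump:small_stencil_q} then gives a remainder of \(O(h^{n+2})\), uniform for \(\xi\in[0,n]\). The truncated coefficients are polynomials in \(\xi\) of bounded degree with jet-dependent coefficients, so \(C_n\) is bounded for \(\xi\) in bounded intervals. Therefore \(|E_i^{[n]}|\le Ch^{n+1}\) for \(i\le\theta\le i+n\).

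\textbf{Distance estimate.} Finally, as at the end of the proof of Proposition~\ref{prop:q_sider2_base}, I would use the local equivalence between \(|\log(q_\theta^{-1}Q)|\) and the spherical distance on the normal neighborhood. This converts the logarithmic bound into \eqref{eq:q_all_order_distance}.
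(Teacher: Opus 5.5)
Your proposal is correct and follows the paper's proof: induction on the level, with Proposition~\ref{prop:q_sider2_base} as the base case and Lemma~\ref{lem:order_raising} as the inductive step, ending with the local equivalence between $|\log(q_\theta^{-1}Q)|$ and spherical distance. Your universality argument, that the neighbouring stencil uses the same coefficient series $C_{k-1}(\theta,\xi-1,h)$, and your finite-order truncation to pass from formal series to genuine bounds make explicit two steps that the paper's proof takes for granted.
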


\begin{proof}
We prove the divided-error representation by induction on \(n\).

For \(n=2\), the result is Proposition~\ref{prop:q_sider2_base}, which gives
\[
        E_i^{[2]}(\theta;h)
        =
        h^3\Pi_2(\xi)C_2(\theta,\xi,h),
\]
with \(C_2\) degree-filtered.

Assume now that the representation has been proved at level \(k-1\), where
\(3\leq k\leq n\).  The two adjacent SIDER-\((k-1)\) candidates then satisfy
\[
        E_i^{[k-1]}(\theta;h)
        =
        h^k\Pi_{k-1}(\xi)C_{k-1}(\theta,\xi,h),
\]
and
\[
        E_{i+1}^{[k-1]}(\theta;h)
        =
        h^k\Pi_{k-1}(\xi-1)C_{k-1}(\theta,\xi-1,h),
\]
where \(C_{k-1}\) is degree-filtered.  Lemma~\ref{lem:order_raising} then
yields
\[
        E_i^{[k]}(\theta;h)
        =
        h^{k+1}\Pi_k(\xi)C_k(\theta,\xi,h),
\]
with \(C_k\) degree-filtered.  This proves the induction step and hence
\eqref{eq:q_all_order_divided_error} for every fixed \(n\).

It remains to translate the logarithmic estimate into a spherical distance
estimate.  By the small-stencil condition, all relevant points lie in a common
normal neighborhood of \(q_\theta=q(\theta h)\).  Hence the quaternion
logarithm is single-valued and
\[
        d_{\bH_1}\bigl(q_\theta,P_i^{[n]}(\theta;h)\bigr)
        =
        \left|
        \log\bigl(q_\theta^{-1}P_i^{[n]}(\theta;h)\bigr)
        \right|
        =
        |E_i^{[n]}(\theta;h)|.
\]
Because the interpolants remain on the pure unit-quaternion copy of
\(\bS^2\), this distance agrees locally with the spherical distance between
the associated points on \(\bS^2\).  Since \(\Pi_n(\xi)\) and
\(C_n(\theta,\xi,h)\) are bounded for \(i\leq\theta\leq i+n\), we obtain
\[
        d_{\bS^2}\bigl(\gamma(\theta h),P_i^{[n]}(\theta;h)\bigr)
        \leq
        C h^{n+1},
\]
which proves \eqref{eq:q_all_order_distance}.
\end{proof}

\begin{corollary}
Under the hypotheses of Theorem~\ref{thm:q_all_order}, the first members of
the SIDER hierarchy satisfy
\[
        \text{SIDER2}=O(h^3),\qquad
        \text{SIDER3}=O(h^4),\qquad
        \text{SIDER4}=O(h^5).
\]
More generally, SIDER-\(n\) has local order \(n+1\) for every fixed
\(n\geq2\).
\end{corollary}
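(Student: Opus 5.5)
The corollary follows from Theorem~\ref{thm:q_all_order} by specializing \(n\); the only work is to record the chain of results behind each listed case and to make explicit that the constants do not depend on \(h\).

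For \(n=2\) I will cite Proposition~\ref{prop:q_sider2_base} directly. It gives
\[
        E_i^{[2]}=h^3\Pi_2(\xi)C_2,
        \qquad
        C_2=-\tfrac16\Omega_3(\theta)+O(h).
\]
Since \(\Pi_2\) and \(C_2\) are bounded for \(\xi\in[0,2]\), this yields SIDER2\(=O(h^3)\).

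For \(n=3\) I will apply Lemma~\ref{lem:order_raising} once, with \(k=3\). Its hypotheses are the SIDER2 representations on the two adjacent stencils starting at \(i\) and \(i+1\). The \(i+1\) version is the same proposition with \(i\) replaced by \(i+1\). In the local variable \(\xi=\theta-i\), the shift \(\xi\mapsto\xi-1\) makes the \(i+1\) representation exactly the form \(h^3\Pi_2(\xi-1)C_2(\theta,\xi-1,h)\) required by the lemma. The one point to check is that the coefficient \(C_2\) is the same function on both stencils. This holds because \(C_2\) depends only on the jet of \(q\) at the common base point \(q(\theta h)\) and on the relative position \(\xi\), not on the index \(i\). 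The lemma then gives \(E_i^{[3]}=h^4\Pi_3(\xi)C_3\) with \(C_3\) degree-filtered, hence SIDER3\(=O(h^4)\).

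A second application with \(k=4\) gives SIDER4\(=O(h^5)\) in the same way. The general statement is the induction already carried out in the proof of Theorem~\ref{thm:q_all_order}.

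The final step, going from the logarithmic representation to a distance bound, proceeds as in the theorem. Degree-filtered coefficients are bounded on bounded \(\xi\)-intervals for \(h\) small. By Assumption~\ref{assump:small_stencil_q}, the identity \(|E_i^{[n]}|=d_{\bH_1}\) agrees with the spherical distance. Together these give \(d_{\bS^2}\le Ch^{n+1}\), with \(C\) depending on \(n\) and the curve but not on \(h\).

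I expect no real obstacle. The only subtle point is the shift-invariance of \(C_{k-1}\) between adjacent stencils, which is implicitly assumed by the theorem's induction, and I will state it explicitly when applying Lemma~\ref{lem:order_raising}.
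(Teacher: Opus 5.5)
Your proposal is correct and follows the paper's route. The corollary is just Theorem~\ref{thm:q_all_order} specialized to $n=2,3,4$, and you unwind the same induction the paper uses: Proposition~\ref{prop:q_sider2_base} as the base case, repeated applications of Lemma~\ref{lem:order_raising}, then the same log-to-distance step. Your explicit check that one function $C_{k-1}$ serves both adjacent stencils is the shift-covariance the paper's induction uses implicitly. It holds because $E_{i+1}^{[k-1]}$, taken at the same base point $q(\theta h)$, is the $E_i^{[k-1]}$ expression with $\xi$ replaced by $\xi-1$.
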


\begin{remark}
The theorem explains the role of the leading divided-error coefficient.  Since
\(C_n\) is degree-filtered, its leading coefficient has degree zero in
\(\xi\).  Therefore
\[
        C_{n,0}(\theta,\xi)=C_{n,0}(\theta)
\]
is independent of the shifted stencil variable.  This is precisely the
coefficient compatibility required for the Neville-type cancellation between
adjacent lower-level SIDER stencils.
\end{remark}

\begin{remark}
The statement is local and asymptotic.  It requires that \(n\) be fixed and
that \(h\) be small enough for all logarithms and SLERP operations to remain
on a common branch.  The theorem does not address global behavior for widely
separated data points, nor does it imply stability or non-oscillatory behavior
for the SENO stencil-selection procedure.
\end{remark}

\section{Discussion and conclusion}
\label{sec:discussion_conclusion}

This paper develops a quaternion--BCH framework for proving the local
high-order accuracy of SIDER interpolation on \(\bS^2\).  The main idea is to
represent spherical data as pure unit quaternions and to analyze each SLERP
operation in left-trivialized logarithmic coordinates.  In this representation,
SLERP is described by the exact Lie-algebra map
\[
        \mathcal S_\lambda(X,Y)
        =
        \BCH\!\left(X,\lambda\,\BCH(-X,Y)\right).
\]
This identity converts the geometry of spherical interpolation into an
algebraic expansion in the quaternion Lie algebra, with all nonlinear
corrections organized by the Baker--Campbell--Hausdorff formula.

The BCH formulation clarifies the relationship between SIDER and classical
Neville interpolation.  In fixed logarithmic coordinates, SLERP is affine to
leading order and has no quadratic correction.  Its first nonlinear term is
cubic and commutator-valued.  Since commutators of pure imaginary quaternions
correspond to cross products in \(\bR^3\), this correction measures the
failure of the endpoint errors to lie along a common infinitesimal geodesic
direction.  For the SIDER2 base construction, the lowest-order logarithmic
data are collinear multiples of the same tangent element.  Consequently, the
cubic commutator correction does not contribute to the leading error, and
SIDER2 retains the third-order divided-error structure of ordinary quadratic
interpolation.

The recursive part of the analysis is expressed through a filtered
divided-error calculus.  The affine part of the SIDER-\(k\) recursion produces
the same finite-difference cancellation as the Euclidean Neville recurrence:
the shifted nodal products combine to form \(\Pi_k(\xi)\), and the difference
of the divided-error coefficients supplies one additional power of \(h\).  The
nonlinear BCH terms are higher order in the already small lower-level errors.
More importantly, when they are written in the diagonal variable
\(\Delta=Y-X\), every power of the interpolation parameter is paired with a
finite-difference degree saving.  This shows that the nonlinear remainder
preserves the sharp filtered polynomial structure needed for the induction.

Combining the SIDER2 base estimate with the recursive order-raising step gives
\[
        E_i^{[n]}(\theta;h)
        =
        h^{n+1}\Pi_n(\xi)C_n(\theta,\xi,h),
        \qquad \xi=\theta-i,
\]
and therefore
$
        d_{\bS^2}\bigl(\gamma(\theta h),P_i^{[n]}(\theta;h)\bigr)
        =
        O(h^{n+1})
$
for every fixed \(n\geq2\).  The representation also explains the coefficient
compatibility needed for adjacent stencils.  Since \(C_n\) is degree-filtered,
its leading coefficient is independent of the shifted variable \(\xi\).  This
shift-invariance is exactly what allows the Neville-type cancellation to
continue from one SIDER level to the next.

The analysis is local and asymptotic.  We assume that the data points,
extrapolated control points, and all intermediate SIDER values remain in a
common normal neighborhood of the exact value.  This ensures that the
quaternion logarithm is single-valued, that all SLERP operations use a
consistent local branch, and that the BCH expansions are valid uniformly on
the stencil.  These assumptions are natural for a local consistency theorem,
but they do not address global effects such as antipodal ambiguity, large
angular separation, or loss of uniqueness of geodesics.

The present framework is restricted to equally spaced parameter values.
Extending the argument to nonuniform stencils should be possible, but it would
require the corresponding nonuniform Neville weights and modified nodal
products.  The BCH representation of SLERP would remain the same, but the
divided-error calculus would need to be reformulated to track the dependence
on nonuniform node locations.

Finally, this paper analyzes the smooth SIDER interpolation operators rather
than the full SENO procedure.  SENO adds a nonlinear stencil-selection rule
designed to reduce oscillations near nonsmooth data.  The local accuracy
analysis here provides the consistency foundation for the smooth candidate
reconstructions used by SENO, but it does not by itself establish stability,
monotonicity, or non-oscillatory behavior of the selection mechanism.

In summary, the quaternion--BCH viewpoint separates the SIDER accuracy
analysis into three transparent components: the Lie-algebraic expansion of
SLERP, the SIDER2 divided-error base estimate, and the Neville-type recursive
order-raising step.  This framework proves the expected \(O(h^{n+1})\) local
accuracy of SIDER-\(n\) for each fixed \(n\), and it identifies the
noncommutative BCH corrections that must be controlled in order for the
Euclidean interpolation cancellation to persist on the sphere.

\section*{Computational Methodology and Disclosure}

This research utilized GPT-5.5 (Thinking) to perform primary computational modeling and mathematical derivation. The author acted as the principal investigator, defining the research parameters and theoretical scope. The author notes that while the computational workflow was executed via AI, the author maintains full responsibility for the study's conclusions. The author acknowledges that the mathematical depth of the generated results exceeds current manual verification capabilities and presents these findings as AI-assisted hypotheses subject to future formal peer verification. Consequently, this article is intended solely for dissemination as a preprint on arXiv and is not submitted to peer-reviewed journals in its current form.

\end{document}